
\documentclass[a4paper,draft]{amsproc}
\usepackage{amssymb}
\usepackage{amscd} 
\usepackage{snapshot}
\usepackage[dvips]{graphicx}
\usepackage{color}
\usepackage{subfigure}
\usepackage{rotating}
\usepackage{amsmath}
\usepackage{amsthm}
\usepackage{amsfonts}
\usepackage{stmaryrd}


\theoremstyle{plain}
 \newtheorem{theorem}{Theorem}[section]
 
 \newtheorem{lemma}{Lemma}[section]

\theoremstyle{remark}
 
 \numberwithin{equation}{section}

\renewcommand{\leq}{\leqslant}
\renewcommand{\geq}{\geqslant}

\setlength{\textwidth}{28cc} \setlength{\textheight}{42cc}

\title[]{And$\hat{o}$-Douglas type characterization of generalized conditional expectations, optional projections and predictable projections}

\subjclass[2010]{Primary 47B60, 60A10; Secondary 06F30, 60G07.}

\keywords{$\sigma$-integrability; generalized conditional expectation; optional projection; predictable projection; stochastic processes; Markov projection.}

\author[Hong]{\bfseries Liang Hong}

\address{
Department of Mathematics \\ 
Robert Morris University   \\ 
Moon, PA 15108, USA}
\email{hong@rmu.edu}




\begin{document}

\vspace{18mm}
\setcounter{page}{1}
\thispagestyle{empty}

\begin{abstract}
Generalized conditional expectations, optional projections and predictable projections of stochastic processes play important roles in the general theory of stochastic processes, semimartingale theory and stochastic calculus. They share some important properties with ordinary conditional expectations. While the characterization of ordinary conditional expectations has been studied by several authors, no similar work seems to have been done for these three concepts. This paper aims at undertaking this task by giving And$\hat{o}$-Douglas type characterization theorem for each of them.
\end{abstract}

\maketitle

\section{Introduction, notation, and setup}  
Properties of (ordinary) conditional expectation operators have been extensively studied in the literature. In particular, various authors have characterized conditional expectations. Earliest works along this line of research include \cite{Bahadur}, \cite{Moy}, \cite{Rota}, and \cite{Sidak}. \cite{Douglas} first characterized conditional expectations as contractive projections on $L_1$ spaces. \cite{AAB} provided a simple proof of the main theorem in \cite{Douglas}. \cite{Ando} extended the results of \cite{Douglas} to $L_p$ spaces. \cite{Pfanzagl} gave two characterizations of conditional expectations using expectation invariance. \cite{Jonge} applied the theory of Riesz spaces to characterize conditional expectations as order-continuous projections.  \cite{DHP} derived a general characterization theorem of conditional expectation operators.  The same result was derived in \cite{Mekler1} and \cite{Mekler2} independently. A refined account is given in \cite{AA}. \cite{Brunk1}, \cite{Brunk2} and \cite{Dykstra} studied and characterized conditional expectations with respect to a $\sigma$-lattice. Recently, \cite{Watson} generalized the And$\hat{o}$-Douglas theorem to the Riesz spaces.

To our best knowledge, no similar efforts have been made for the generalized conditional expectation (cf. Section I.4 of \cite{HWY}). This paper aims at filling this gap by proving an And$\hat{o}$-Douglas type characterization theorem for it. It seems that the concept of the generalized conditional expectation first appeared in \cite{Meyer}. Note that the generalized conditional expectation considered in \cite{Hornor} is different from the one we shall consider. Also, the generalized conditional expectation discussed in \cite{Yan1} is slightly more general than the one considered in this paper.

Optional projections and predictable projections of stochastic processes are two important concepts in the general theory of stochastic process; they are closely related to ordinary conditional expectations and generalized conditional expectations. On the theoretical side, their important connections to the semimartingale theory and stochastic calculus cannot be overemphasized (cf. \cite{DM}, \cite{HWY} and \cite{Protter}). On the applied side, \cite{Armerin} and \cite{NS} have successfully applied some related theoretical results to actuarial science and mathematical finance. It is known that optional and predictable projections share several properties with conditional expectations (cf. \cite{DM}, \cite{Dinculeanu}, \cite{HWY} and \cite{Protter}). Therefore, it is natural to ask whether it possible to characterize optional and predictable projections in a similar manner as in \cite{Ando} and \cite{Douglas} . We give an affirmative answer by giving characterization theorems for both.

Below we provide readers with the basic concepts which are necessary for this paper. For more details concerning general theory of stochastic processes, readers are referred to \cite{DM}, \cite{HWY} and \cite{Protter}; for further results concerning operator theory on Riesz spaces, readers are referred to \cite{AA}, \cite{AB}, and \cite{AB2}.

Here and throughout, all probabilistic objects are referred with respect to a fixed filtered probability space $(\Omega, \mathcal{F}, (\mathcal{F}_t)_{t\geq 0},  P)$ unless otherwise stated. The filtration $(\mathcal{F}_t)_{t\geq 0}$ is assumed to satisfy the usual conditions. We put
\begin{eqnarray*}
\mathcal{F}_{t+} &=& \cap_{s>t}\mathcal{F}_s, \quad t\geq 0, \nonumber\\
\mathcal{F}_{t-}&=&\cup_{s<t}\mathcal{F}_s=\sigma\left(\cup_{s<t}\mathcal{F}_s\right), \quad t>0,\\
\mathcal{F}_{0-} &=& \mathcal{F}_0, \nonumber\\
\mathcal{F}_{\infty} &=& \vee_{t\geq 0}\mathcal{F}_t=\sigma\left(\cup_{t\geq
                                                     0}\mathcal{F}_t\right), \nonumber \\
\mathcal{F}_{\infty-} &=& \mathcal{F}_{\infty}.
\end{eqnarray*}
Without loss of generality, two $P$-a.s. equal random variables will be considered equivalent. Let $\mathcal{G}$ be a sub-$\sigma$-field of $\mathcal{F}$. Recall that a random variable $\xi$ is said to be \emph{$\sigma$-integrable} with respect to $\mathcal{G}$ if there exists a sequence $\{\Omega_n\}\subset \mathcal{G}$ such that $\Omega_n\uparrow \Omega$ and $\xi1_{\Omega_n}$ is integrable for each $n$. If we put
\begin{equation}\label{1.1}
\mathcal{C}=\{A\in \mathcal{G}\mid \text{$\xi1_A$ is integrable}\},
\end{equation}
then there exists a $P$-a.s. unique real-valued $\mathcal{G}$-measurable random variable $\eta$ such that
\begin{equation}\label{1.2}
E[\xi1_A]=E[\eta1_A], \quad \forall A\in\mathcal{C}.
\end{equation}
$\eta$ is called the \emph{generalized conditional expectation} of $\xi$ with respect to $\mathcal{G}$ and is denoted as $E[\xi|\mathcal{G}]$ (cf. Theorem 1.17 of \cite{HWY}).
For a given sub-$\sigma$-field $\mathcal{G}$ of $\mathcal{F}$, $L_{\sigma}(\mathcal{G})$ will denote the family of all $\sigma$-integrable random variables with respect to $\mathcal{G}$; $L_1(\mathcal{G})$ will denote the family of all $\mathcal{G}$-measurable integrable random variables. Then the linear operator $E[\cdot\mid \mathcal{G}]: L_{\sigma}(\mathcal{G})\rightarrow L_{\sigma}(\mathcal{G})$ is called the \emph{generalized expectation operator} induced by $\mathcal{G}$.

Let $R_+=[0, \infty)$ and $\overline{R}=R\cup\{+\infty\}$. The Borel-$\sigma$-field on a set $E$ will be denoted by $\mathcal{B}(E)$. A \emph{stopping time} $T$ is an $\overline{R}$-valued random variable on $(\Omega, \mathcal{F})$ such that $[T\leq t]\in\mathcal{F}_t$ for all $t\geq 0$. The \emph{$\sigma$-field of events prior to $T$}, denoted by $\mathcal{F}_T$, is the set defined by
\begin{equation*}
\mathcal{F}_T=\{A\in\mathcal{F}_{\infty}\mid A\cap [T\leq t]\in\mathcal{F}_t\ \text{for all $t\geq 0$}\}.
\end{equation*}
Likewise, the \emph{$\sigma$-field of events strictly prior to $T$}, denoted by $\mathcal{F}_{T-}$, is the set defined by \begin{equation*}
\mathcal{F}_{T-}=\sigma(\{A\cap [t\leq T]\mid A\in \mathcal{F}_{t-} \text{ for all $t\geq 0$} \}).
\end{equation*}

We will use $\mathcal{T}$ to denote the collection of all stopping times, that is,
\begin{equation*}
\mathcal{T}=\{T\mid \text{ $T$ is a stopping time}\}.
\end{equation*}

A \emph{stochastic process} (or simply \emph{process}) $(X_t)_{t\geq 0}$ is a family of real-valued random variables indexed by $R_+$. It will often be denoted as $X$ for simplicity. We assume all processes are c$\grave{a}$dl$\grave{a}$g, that is, their sample paths are right-continuous and have left-hand limits.
A subset set $B$ of $\Omega\times R_+$ is said to be \emph{$P$-evanescent} if the projection of $B$ onto $\Omega$ is a $P$-null set. Two processes $X$ and $Y$ are said to be \emph{$P$-indistinguishable} if $\{(\omega, t)\mid X(\omega, t)\neq X(\omega, t)\}$ is a $P$-evanescent set. All $P$-indistinguishable processes will be treated as equivalent. A process $X$ is said to be \emph{measurable} if the mapping $(\omega, t)\mapsto X(\omega, t)$ is $\mathcal{F}\times\mathcal{B}(R_+)$-measurable. A process $X$ is said to be \emph{optional} (respectively \emph{predictable}) if it is $\mathcal{O}$-measurable (respectively $\mathcal{P}$-measurable), where $\mathcal{O}$ and $\mathcal{P}$ are the optional $\sigma$-field and predictable $\sigma$-field, respectively. A process $X$ is said to be \emph{progressively measurable} or \emph{progressive} if for every $t\geq 0$ the mapping $ (\omega, t)\mapsto X(\omega, t)$ restricted on $\Omega\times [0, t]$ is $\mathcal{F}_t\times \mathcal{B}([0, t])$-measurable.

A partially ordered set $X$ is called a \emph{lattice} if the infimum and supremum of any pair of elements in $X$ exist. A real vector space $X$ is called an \emph{ordered vector space} if its vector space structure is compatible with the order structure in a manner such that
\begin{enumerate}
  \item [(a)]if $x\leq y$, then $x+z\leq y+z$ for any $z\in X$;
  \item [(b)]if $x\leq y$, then $\alpha x\leq \alpha y$ for all $\alpha\geq 0$.
\end{enumerate}
An ordered vector space is called a \emph{Riesz space} (or a \emph{vector lattice}) if it is also a lattice at the same time. For any pair $x, y$ in a Riesz space, $x\vee y$ denotes and supremum of $\{x, y\}$, $x\wedge y$ denotes the infimum of $\{x, y\}$, and $|x|$ denotes $x\vee (-x)$. A vector subspace of a Riesz space is said to be a \emph{Riesz subspace} if it is closed under the lattice operation $\vee$. A subset $Y$ of a Riesz space $X$ is said to be \emph{solid} if $|x| \leq |y|$ and $y\in Y$ imply that $x\in Y$. A solid vector subspace of a Riesz space is called an \emph{ideal}. A Riesz space $X$ is said to be \emph{Dedekind complete} if every nonempty subset of $X$ that is bounded from above has a supremum. A Riesz space is said to be \emph{$\sigma$-Dedekind complete} if every nonempty countable subset that is bounded from above or bounded from below has a supremum or infimum, respectively. A Riesz space is said to have the \emph{countable super property} or \emph{order separable} if for every subset having a supremum contains an at most countable subset having the same supremum. A Dedekind complete Riesz space with the countable super property is said to be \emph{super Dedekind complete}. A net $(x_{\alpha})_{\alpha\in A}$ is said to be \emph{decreasing} if $\alpha\geq \beta$ implies $x_{\alpha}\leq x_{\beta}$. The notation $x_{\alpha}\downarrow x$ means $(x_{\alpha})_{\alpha\in A}$ is a decreasing net and the infimum of the set $\{x_{\alpha}\mid \alpha\in A\}$ is $x$. A net $(x_{\alpha})_{\alpha\in A}$ in a Riesz space $X$ is said to be \emph{order-convergent} to an element $x\in X$, often written as $x_{\alpha}\xrightarrow{o} x$,  if there exists another net $(y_{\alpha})_{\alpha\in A}$ in $X$ such that $|x_{\alpha}-x|\leq y_{\alpha}\downarrow 0$.

Let $T$ be a linear operator on a vector space $X$. A vector subspace $Y$ of $X$ is said to be \emph{$T$-invariant} if $T(Y)\subset Y$. In this case, we say $T$ leaves $Y$ invariant. A linear operator $T$ on a vector space $X$ is said to be a \emph{projection} if $T^2=T$. A linear operator $T$ between two Riesz spaces $X$ and $Y$ is said to be \emph{positive} if $x\in X$ and $x\geq 0$ implies $T(x)\geq 0$; $T$ is said to be \emph{strictly positive} if $x\in X$ and $x>0$ implies $T(x)>0$; $T$ is said to be \emph{order-continuous} if $x_{\alpha}\xrightarrow{o} 0$ in $X$ implies $T(x_{\alpha})\xrightarrow{o} 0$ in $Y$. A positive projection that leaves the constant function $1$ invariant is called a \emph{Markov projection}. A linear operator $T$ on a vector space $V$ is called an \emph{averaging operator} if $T(y T(x))=T(y)T(x)$ for any pair $x, y \in V$ such that $y T(x)\in V$.

The remainder of this paper is organized as follows: Section 2 gives a characterization theorem of generalized conditional expectations; Section 3 characterizes optional projections; Section 4 proves a characterization theorem of predictable projections.

\section{Characterization of generalized conditional expectations}
Theorem \ref{theorem2.1} gives some important properties of the generalized conditional expectation in terms of operator theory. First, we need a lemma.

\begin{lemma}\label{lemma2.1}
The space $L_{\sigma}(\mathcal{G})$ is super Dedekind complete.
\end{lemma}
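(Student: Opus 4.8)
The plan is to show that $L_\sigma(\mathcal{G})$ is a super Dedekind complete Riesz space by transporting the corresponding (well-known) property of $L^0(\Omega,\mathcal{G},P)$, the space of all (equivalence classes of) real-valued $\mathcal{G}$-measurable random variables, via an order isomorphism onto an order-dense ideal. First I would observe that $L_\sigma(\mathcal{G})$ is a Riesz subspace of $L^0(\mathcal{G})$: if $\xi$ is $\sigma$-integrable with reducing sequence $\{\Omega_n\}$ and $\xi'$ with $\{\Omega'_n\}$, then $\Omega_n\cap\Omega'_n\uparrow\Omega$ reduces any linear combination and also $|\xi|$, $\xi\vee\xi'$, etc.; moreover if $|\eta|\le|\xi|$ with $\eta$ $\mathcal{G}$-measurable then the same sequence reduces $\eta$, so $L_\sigma(\mathcal{G})$ is in fact an ideal of $L^0(\mathcal{G})$. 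I would then verify it is order dense: given $0<\xi\in L^0(\mathcal{G})$, the sets $A_n=\{\xi\le n\}\in\mathcal{G}$ increase to $\{\xi<\infty\}$ (which has full measure), so $\xi 1_{A_n}\uparrow\xi$ with each $\xi 1_{A_n}$ bounded, hence integrable, hence in $L_\sigma(\mathcal{G})$; this exhibits $\xi$ as the supremum of elements of $L_\sigma(\mathcal{G})$.

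Next I would invoke the standard fact (see e.g. \cite{AB}) that $L^0$ of a probability space is super Dedekind complete: it is Dedekind complete, and it is order separable because any order-bounded family has a countable subfamily with the same supremum (one can extract this from the fact that $P$ is a finite measure, using the supremum of the expectations of finitely-many maxima). The key step is then the general principle that an order-dense ideal of a super Dedekind complete Riesz space is itself super Dedekind complete. Concretely: if $D\subset L_\sigma(\mathcal{G})$ is bounded above in $L_\sigma(\mathcal{G})$, it is bounded above in $L^0(\mathcal{G})$, so $\sup D$ exists there; by order density of $L_\sigma(\mathcal{G})$ this supremum is approximated from below by elements of $L_\sigma(\mathcal{G})$, and one checks the supremum actually lies in $L_\sigma(\mathcal{G})$ — here is where I would use that $D$ has an upper bound $u\in L_\sigma(\mathcal{G})$, so $\sup D\le u$ and the ideal property forces $\sup D\in L_\sigma(\mathcal{G})$, with the supremum computed in $L_\sigma(\mathcal{G})$ agreeing with that in $L^0(\mathcal{G})$ precisely because the ideal is order dense. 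The countable super property is inherited directly: a countable subfamily of $D$ realizing $\sup D$ in $L^0(\mathcal{G})$ works verbatim in $L_\sigma(\mathcal{G})$.

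I expect the main obstacle to be bookkeeping rather than anything deep: one must be careful that $\sigma$-integrability is genuinely an ideal property and that the reducing sequences can be chosen in $\mathcal{G}$ and simultaneously for finitely many processes, and one must correctly cite or reproduce the passage from Dedekind completeness of $L^0$ plus order density to Dedekind completeness of the ideal (the subtle point being that suprema in an order-dense ideal of a Dedekind complete space, when they exist, coincide with those in the ambient space). Once these two structural facts are in place, the conclusion that $L_\sigma(\mathcal{G})$ is super Dedekind complete is immediate.
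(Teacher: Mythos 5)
Your proposal is correct and takes essentially the same route as the paper: both arguments show that $L_{\sigma}(\mathcal{G})$ is an ideal of the super Dedekind complete space $L_0(\mathcal{G})$ of all $\mathcal{G}$-measurable random variables and then transfer the property to the ideal. The order-density step you add is harmless but not needed, since every ideal of a super Dedekind complete Riesz space is automatically super Dedekind complete --- your own closing argument (an upper bound $u\in L_{\sigma}(\mathcal{G})$ plus solidity forces the ambient supremum into the ideal, where it is also the supremum) already establishes this without invoking density.
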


\begin{proof}
Let $L_0(\mathcal{G})$ denote the space of all $\mathcal{G}$-measurable random variables.
Since a probability measure is $\sigma$-finite, $L_{0}(\mathcal{G})$ is a super Dedekind complete Riesz space. It is clear that $L_{\sigma}(\mathcal{G})$ is a vector subspace of $L_0(\mathcal{G})$. Suppose $\xi, \eta\in L_0(\mathcal{G})$ such that $|\eta|\leq |\xi|$ and $\xi\in L_{\sigma}(\mathcal{G})$. Then there exists a positive element $\zeta\in L_0(\mathcal{G})$ such that $\xi\zeta$ is integrable. It follows that $\eta \zeta$ is integrable too. Hence, $\eta\in L_{\sigma}(\mathcal{G})$. This shows that $L_{\sigma}(\mathcal{G})$ is an ideal of $L_0(\mathcal{G})$. Therefore, $L_{\sigma}(\mathcal{G})$ is super Dedekind complete.\\
\end{proof}

\begin{theorem}\label{theorem2.1}
Let $(\Omega, \mathcal{F}, P)$ be a probability space and $\mathcal{G}$ be a sub-$\sigma$-field of $\mathcal{F}$. Then the generalized conditional expectation operator $E[\cdot\mid \mathcal{G} ]: L_{\sigma}(\mathcal{G})\rightarrow L_{\sigma}(\mathcal{G})$ is a strictly positive and order-continuous Markov projection. Moreover, $E[\cdot\mid \mathcal{G} ]$ is an averaging operator and leaves the space $L_{\sigma}^p=\{\xi\mid \text{$\xi^p$ is $\sigma$-integrable}\}$ invariant for $1\leq p\leq \infty$.
\end{theorem}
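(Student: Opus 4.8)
The plan is to verify the listed properties one at a time, in each case reducing the statement about $L_{\sigma}(\mathcal{G})$ to the familiar statement about ordinary conditional expectations by the standard localization argument. Concretely, for $\xi\in L_{\sigma}(\mathcal{G})$ fix a sequence $\Omega_n\uparrow\Omega$ in $\mathcal{G}$ with $\xi 1_{\Omega_n}\in L_1$, and note that by the defining identity \eqref{1.2} one has $E[\xi\mid\mathcal{G}]1_{\Omega_n}=E[\xi 1_{\Omega_n}\mid\mathcal{G}]$ $P$-a.s., where the right-hand side is an ordinary conditional expectation. Since $\Omega_n\uparrow\Omega$, any identity or inequality between ordinary conditional expectations of the truncations passes to the limit and yields the corresponding statement for $E[\cdot\mid\mathcal{G}]$. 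This observation is the workhorse for every part.

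First I would check that $E[\cdot\mid\mathcal{G}]$ is a projection: for $\xi\in L_{\sigma}(\mathcal{G})$, $E[\xi\mid\mathcal{G}]$ is $\mathcal{G}$-measurable, and for a $\mathcal{G}$-measurable $\eta\in L_{\sigma}(\mathcal{G})$ one checks from \eqref{1.1}--\eqref{1.2} that $E[\eta\mid\mathcal{G}]=\eta$, whence $E[\cdot\mid\mathcal{G}]^2=E[\cdot\mid\mathcal{G}]$. Positivity is immediate from the truncation identity since ordinary conditional expectation is positive; strict positivity follows because if $\xi>0$ then $\xi 1_{\Omega_n}\geq 0$ with strict positivity on a set of positive measure for large $n$, so $E[\xi 1_{\Omega_n}\mid\mathcal{G}]>0$ on a non-null set, and letting $n\to\infty$ gives $E[\xi\mid\mathcal{G}]>0$. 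That $1$ is fixed (so the projection is Markov) is the special case $\xi=1$, which is already $\mathcal{G}$-measurable and integrable. For order-continuity I would use Lemma \ref{lemma2.1}: $L_{\sigma}(\mathcal{G})$ is super Dedekind complete, so order convergence of a net can be tested along a sequence; for a sequence $\xi_k\downarrow 0$ in $L_{\sigma}(\mathcal{G})$, monotonicity of $E[\cdot\mid\mathcal{G}]$ gives a decreasing sequence $E[\xi_k\mid\mathcal{G}]$ bounded below by $0$, and its infimum $\zeta$ satisfies $0\le\zeta\le E[\xi_k\mid\mathcal{G}]$; truncating to $\Omega_n$ and invoking the conditional monotone/dominated convergence theorem forces $\zeta 1_{\Omega_n}=0$ a.s. for each $n$, hence $\zeta=0$.

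For the averaging property $E[\eta E[\xi\mid\mathcal{G}]\mid\mathcal{G}]=E[\eta\mid\mathcal{G}]E[\xi\mid\mathcal{G}]$ (whenever $\eta E[\xi\mid\mathcal{G}]\in L_{\sigma}(\mathcal{G})$), I would again truncate: choose a single sequence $\Omega_n\in\mathcal{G}$, $\Omega_n\uparrow\Omega$, simultaneously localizing $\xi$, $\eta$, and $\eta E[\xi\mid\mathcal{G}]$; on each $\Omega_n$ the identity reduces to the ``taking out what is known'' property of ordinary conditional expectations, $E[(\eta 1_{\Omega_n})\,E[\xi\mid\mathcal{G}]\mid\mathcal{G}]=E[\eta 1_{\Omega_n}\mid\mathcal{G}]\,E[\xi\mid\mathcal{G}]$, since $E[\xi\mid\mathcal{G}]$ is $\mathcal{G}$-measurable; letting $n\to\infty$ yields the claim. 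Finally, invariance of $L_{\sigma}^p$ for $1\le p\le\infty$ follows from the conditional Jensen inequality applied to the convex function $t\mapsto|t|^p$ on truncations: $|E[\xi\mid\mathcal{G}]|^p 1_{\Omega_n}=|E[\xi 1_{\Omega_n}\mid\mathcal{G}]|^p\le E[|\xi|^p 1_{\Omega_n}\mid\mathcal{G}]$, and if $\xi^p$ is $\sigma$-integrable one can arrange $|\xi|^p 1_{\Omega_n}$ integrable, so the right-hand side is integrable and hence so is the left, exhibiting a localizing sequence for $E[\xi\mid\mathcal{G}]$; the case $p=\infty$ is the elementary bound $\|E[\xi\mid\mathcal{G}]\|_\infty\le\|\xi\|_\infty$. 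The main obstacle is bookkeeping rather than depth: one must be careful that the localizing sequences chosen for $\xi$, for $\eta$, and for the product all can be taken to be a common sequence in $\mathcal{G}$ (take finite intersections), and that the $P$-a.s. exceptional null sets from the countably many identities on the $\Omega_n$ can be combined into one.
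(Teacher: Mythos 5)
Your proof is correct, and its skeleton matches the paper's: verify each property in turn, use Lemma \ref{lemma2.1} (super Dedekind completeness, hence the countable sup property) to upgrade sequential order continuity of a positive operator to order continuity, use the conditional Jensen inequality for the invariance of $L_{\sigma}^p$, and use the smoothing/taking-out-what-is-known property for idempotence and the averaging identity. The genuine difference is one of self-containedness: the paper simply invokes the known properties of generalized conditional expectations (strict positivity from \eqref{1.2}, the dominated convergence theorem, the smoothing property, Jensen's inequality), whereas you re-derive each of these from the corresponding classical fact for ordinary conditional expectations via the localization identity $E[\xi\mid\mathcal{G}]1_{\Omega_n}=E[\xi 1_{\Omega_n}\mid\mathcal{G}]$, with a common localizing sequence obtained by intersecting. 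This buys a proof resting only on standard $L_1$ conditional expectation theory and makes explicit why the operator maps $L_{\sigma}(\mathcal{G})$ into itself, at the cost of the bookkeeping you acknowledge; the paper's version is shorter because it delegates these facts to the literature (cf.\ \cite{HWY}). If you expand the sketch, two details are worth writing out: the identity $E[\xi\mid\mathcal{G}]1_{\Omega_n}=E[\xi 1_{\Omega_n}\mid\mathcal{G}]$ presupposes that $E[\xi\mid\mathcal{G}]1_{\Omega_n}$ is integrable (seen, e.g., by splitting on the sets $\{E[\xi\mid\mathcal{G}]\geq 0\}$ and its complement and using \eqref{1.2}), and in the order-continuity step the single localizing sequence serving all $\xi_k$ with $\xi_k\downarrow 0$ is the one for $\xi_1$, since $0\leq\xi_k\leq\xi_1$.
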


\begin{proof}
Equation (\ref{1.2}) shows that $E[\cdot\mid\mathcal{G}]$ is strictly positive. Since the dominated convergence theorem holds for the generalized condition expectation, we know that if $\xi_n\downarrow 0$ in $L_{\sigma}(\mathcal{G})$, then $E[\xi_n\mid \mathcal{G}]\downarrow 0$ in $L_{\sigma}(\mathcal{G})$. According to Lemma \ref{lemma2.1}, the Riesz space $L_{\sigma}(\mathcal{G})$ is super Dedekind complete. It follows that the operator $E[\cdot \mid \mathcal{G}]$ is order-continuous. For any $\xi\in L_{\sigma}(\mathcal{G})$, the generalized conditional expectation $E(\xi\mid \mathcal{G})$ is $\mathcal{G}$-measurable. Therefore, the smoothing property of the generalized conditional expectation implies $E[E[\xi\mid \mathcal{G}]\mid \mathcal{G}]=E[\xi\mid \mathcal{G}]$. This shows that $E[\cdot\mid\mathcal{G}]^2=E[\cdot \mid \mathcal{G}]$, i.e., $E[\cdot\mid \mathcal{G}]$ is a projection. It is clear that $E[1_{\Omega}\mid \mathcal{G}]=1_{\Omega}$. Hence, $E[\cdot\mid \mathcal{G}]$ is a Markov projection.

To see the second statement, take any pair $\xi, \eta\in L_{\sigma}(\mathcal{G})$ with $\eta E[\xi\mid\mathcal{G}] \in L_{\sigma}(\mathcal{G})$. Since $E[\xi\mid\mathcal{G}]$ is $\mathcal{G}$-measurable, the smoothing property of the generalized conditional expectation implies $E[\eta E[\xi\mid\mathcal{G}]\mid \mathcal{G}]=E[\xi\mid\mathcal{G}]E[\eta\mid\mathcal{G}]$. It remains to show $E[\cdot\mid\mathcal{G}]$ leaves $L_{\sigma}^p$ invariant. The case where $p=1$ or $p=\infty$ is trivial. Assume $1<p<\infty$. Let $\xi\in L_{\sigma}^p$. Then Jensen's inequality for generalized conditional expectations implies $(E[|\xi|\mid \mathcal{G}])^p\leq  E[|\xi|^p\mid \mathcal{G}]$. Since $\xi^p$ is $\sigma$-integrable, so is $E[|\xi|^p\mid \mathcal{G}]$. It follows that $E[|\xi|\mid \mathcal{G}]\in L_{\sigma}^p(\mathcal{G})$.
\end{proof}
\noindent \textbf{Remark.} The ordinary conditional expectation operator is evidently a contractive projection on $L_1$ spaces. However, this result cannot be extended to the generalized conditional expectation operator because the generalized conditional expectation is only $\sigma$-integrable and the $L_1$ norm in general does not apply to the space $L_{\sigma}(\mathcal{G})$.\\

The following Douglas' theorem (Corollary 11 in \cite{Douglas}) is the classical characterization theorem for ordinary conditional expectation operators.

\begin{theorem}\label{theorem2.2}
Let $(\Omega, \mathcal{F}, P)$ be a probability space and $T: L_1(\mathcal{F})\rightarrow L_1(\mathcal{F})$ be a linear operator. Then the following two statements are equivalent.
\begin{enumerate}
  \item [(i)]$T$ is a contractive Markov projection.
  \item [(ii)]$T$ is an ordinary conditional expectation operator, that is, there exists a sub-$\sigma$-field $\mathcal{G}$ of $\mathcal{F}$ such that $T$ is the ordinary conditional expectation operator $E[\cdot\mid\mathcal{G}]$.
\end{enumerate}
\end{theorem}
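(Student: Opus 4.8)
The plan is to prove the two implications separately. The direction (ii) $\Rightarrow$ (i) is the easy one: idempotence of $E[\cdot\mid\mathcal G]$ is the tower property, $E[1\mid\mathcal G]=1$ and positivity are immediate, and contractivity on $L_1$ follows from the conditional Jensen inequality, since $\|E[f\mid\mathcal G]\|_1=E[\,|E[f\mid\mathcal G]|\,]\le E[E[|f|\mid\mathcal G]]=\|f\|_1$. So essentially all the work is in (i) $\Rightarrow$ (ii). Fix a contractive Markov projection $T$ on $L_1(\mathcal F)$; by the definitions recalled above this means $T$ is linear, positive, $T^2=T$, $\|T\|\le 1$, and $T1=1$. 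Write $M=\{f\in L_1:Tf=f\}$ for the range of $T$; it is a norm-closed subspace, being $\ker(I-T)$ with $I-T$ bounded, and it contains $1$.

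The first key step is a \emph{measure-preservation} identity: $\int Tf\,dP=\int f\,dP$ for every $f\in L_1$. For $0\le f\le 1$ both $Tf\ge 0$ and $T(1-f)=1-Tf\ge 0$ by positivity and $T1=1$, so $\int Tf=\|Tf\|_1\le\|f\|_1=\int f$ and likewise $\int(1-Tf)\le\int(1-f)$; adding these two inequalities forces equality in both, and linearity and scaling extend the identity to all of $L_1$. As an immediate consequence, $|Tf|\le T|f|$ by positivity, so every $f\in M$ satisfies $|f|\le T|f|$ together with $\int T|f|=\int|f|$, whence $T|f|=|f|$; thus $M$ is a Riesz subspace of $L_1$ containing the constants.

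Next I would introduce the candidate $\sigma$-field $\mathcal G=\{A\in\mathcal F:T1_A=1_A\ \text{a.s.}\}$ and check it is a sub-$\sigma$-field containing the $P$-null sets. It contains $\Omega$, it is stable under complementation ($T1_{A^c}=1-T1_A$) and under finite disjoint unions (additivity of $T$), and the one delicate point is stability under intersection: for $A,B\in\mathcal G$, positivity gives $0\le T1_{A\cap B}\le T1_A=1_A$ and similarly $\le 1_B$, so $T1_{A\cap B}$ is bounded by $1$ and supported in $A\cap B$, while $\int T1_{A\cap B}=P(A\cap B)$ by measure-preservation, and these force $T1_{A\cap B}=1_{A\cap B}$. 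Countable unions then follow by writing them as increasing limits of finite unions and invoking the $L_1$-continuity of $T$. I would also record that $M\subseteq L_1(\mathcal G)$: for $h\in M$ and any real $c$, the functions $n(h-c1)^+\wedge 1$ all lie in $M$ (because $M$ is a Riesz subspace containing the constants) and increase boundedly, hence in $L_1$-norm, to $1_{\{h>c\}}$; since $M$ is closed, $1_{\{h>c\}}\in M$, i.e. $\{h>c\}\in\mathcal G$. Thus every $Tf$ is $\mathcal G$-measurable.

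The last step identifies $T$ with $E[\cdot\mid\mathcal G]$. I would first prove a localization property: if $h\in L_1$ vanishes a.s. off some $A\in\mathcal G$, then so does $Th$; indeed for $0\le h\le n1_A$ this is immediate from $0\le Th\le nT1_A=n1_A$, and the general case follows by truncation, $L_1$-approximation and splitting into positive and negative parts. Applying this to $1_A g$ and to $1_{A^c}g$ and adding gives the averaging identity $T(1_A g)=1_A Tg$ for every $g\in L_1$ and $A\in\mathcal G$. Consequently $\int_A Tg\,dP=\int T(1_A g)\,dP=\int_A g\,dP$, the middle equality being measure-preservation applied to $1_A g$. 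Since $Tg$ is moreover $\mathcal G$-measurable and integrable, the defining property of the conditional expectation yields $Tg=E[g\mid\mathcal G]$ for all $g$, which is (ii). I expect the main obstacle to be precisely the measure-preservation identity together with the repeated use one must make of it — to turn $M$ into a closed Riesz subspace, to force $\mathcal G$ to be intersection-stable, and to upgrade the support-localization of the final step into the averaging identity; once that identity is secured, what remains is routine bookkeeping with monotone limits and the continuity of $T$.
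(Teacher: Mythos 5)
Your proof is correct, but there is no in-paper argument to compare it with: the paper states this result as the classical Douglas theorem (Corollary 11 of \cite{Douglas}) and cites it without proof, pointing to \cite{AAB} for an elementary proof. Your route is in substance that standard elementary argument: the measure-preservation identity $\int Tf\,dP=\int f\,dP$ obtained by playing contractivity against $T1=1$ on $0\le f\le 1$; the fixed space $M=\ker(I-T)$ shown to be a norm-closed Riesz subspace containing the constants; the $\sigma$-field $\mathcal G=\{A:\ T1_A=1_A\ \text{a.s.}\}$, with intersection-stability forced by positivity plus measure preservation; $\mathcal G$-measurability of the range via $n(h-c)^+\wedge 1\in M$; localization and the averaging identity $T(1_Ag)=1_A\,Tg$; and finally the defining relation $\int_A Tg\,dP=\int_A g\,dP$ identifying $T$ with $E[\cdot\mid\mathcal G]$. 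Note that your free use of positivity is legitimate here because the paper's definition of a Markov projection builds positivity in (in Douglas's original formulation it has to be derived from contractivity and $T1=1$ first). Two small points worth tightening, neither of which affects correctness: the extension of measure preservation ``by linearity and scaling'' only reaches bounded $f$ directly, so add a truncation or density step (both $f\mapsto\int Tf\,dP$ and $f\mapsto\int f\,dP$ are $L_1$-continuous and agree on the dense subspace of bounded functions); and in the localization step the almost-sure conclusion from the $L_1$-limit should be extracted along a subsequence or by monotonicity of the truncations.
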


Next, we prove a characterization theorem for the generalized conditional expectation operator. This theorem generalizes Dougals' theorem to the case of generalized conditional expectation operators.

\begin{theorem}\label{theorem2.3}
Let $(\Omega, \mathcal{F}, P)$ be a probability space and $\mathcal{G}$ be a sub-$\sigma$-field of $\mathcal{F}$. Then the following two statements are equivalent.
\begin{enumerate}
  \item [(i)]$T$ is a Markov projection on $L_{\sigma}(\mathcal{G})$ such that
             \begin{enumerate}
               \item [(a)]$T(\xi)$ is $\mathcal{G}$-measurable and $T(\xi1_A)=T(\xi)1_A$ for $\xi\in L_{\sigma}(\mathcal{G})$ and $A\in\mathcal{G}$;
               \item [(b)]the restriction of $T$ on $L_1(\mathcal{\mathcal{G}})$ is a contractive projection.
             \end{enumerate}
  \item [(ii)]$T$ is the generalized conditional expectation operator induced by $\mathcal{G}$.
\end{enumerate}
\end{theorem}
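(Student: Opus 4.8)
The plan is to prove the two implications separately. The direction $(ii)\Rightarrow(i)$ is essentially a restatement of Theorem \ref{theorem2.1}: the generalized conditional expectation operator is a Markov projection, property (a) is exactly the $\mathcal{G}$-measurability of $E[\xi\mid\mathcal{G}]$ together with the ``pull-out'' property $E[\xi1_A\mid\mathcal{G}]=1_A E[\xi\mid\mathcal{G}]$ for $A\in\mathcal{G}$ (a special case of the averaging identity already established), and property (b) follows from the fact that on $L_1(\mathcal{G})$ the generalized conditional expectation coincides with the ordinary conditional expectation, which is contractive by the remark preceding Theorem \ref{theorem2.2} (or directly from Jensen). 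So the work is all in $(i)\Rightarrow(ii)$.

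For $(i)\Rightarrow(ii)$, first I would reduce to $L_1$. Since a probability measure is finite, $L_1(\mathcal{G})\supset L_\infty(\mathcal{G})$ and, more importantly, $L_1(\mathcal{G})$ is order-dense in $L_\sigma(\mathcal{G})$: for any $\xi\in L_\sigma(\mathcal{G})$ with $\xi\ge 0$ there is a sequence $\Omega_n\uparrow\Omega$ in $\mathcal{G}$ with $\xi 1_{\Omega_n}\in L_1(\mathcal{G})$ and $\xi 1_{\Omega_n}\uparrow\xi$. By hypothesis (b), $T$ restricted to $L_1(\mathcal{G})$ is a contractive Markov projection on $L_1(\mathcal{G})$ (it is a projection because $T^2=T$ on the larger space, it maps into $L_1(\mathcal{G})$ by (b), and it fixes $1$); hence by Douglas' theorem (Theorem \ref{theorem2.2}, applied to the probability space $(\Omega,\mathcal{G},P)$) there is a sub-$\sigma$-field $\mathcal{H}\subset\mathcal{G}$ with $T\xi=E[\xi\mid\mathcal{H}]$ for all $\xi\in L_1(\mathcal{G})$. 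The key step is then to show $\mathcal{H}=\mathcal{G}$ (up to $P$-null sets). One inclusion is the range identification: since $T$ is a projection with range contained in the $\mathcal{G}$-measurable functions (by (a)), $\mathcal{H}\subset\mathcal{G}$. For the reverse, note that $T$ fixes every $\mathcal{G}$-measurable integrable function: given $A\in\mathcal{G}$, property (a) with $\xi=1_\Omega$ gives $T(1_A)=T(1_\Omega)1_A=1_A$, so $1_A$ lies in the range of $T\restriction L_1(\mathcal{G})$, i.e. $1_A$ is $\mathcal{H}$-measurable; hence $A\in\mathcal{H}$ and $\mathcal{G}\subset\mathcal{H}$. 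Therefore $T\xi=E[\xi\mid\mathcal{G}]$ on $L_1(\mathcal{G})$.

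It remains to extend the identity $T=E[\cdot\mid\mathcal{G}]$ from $L_1(\mathcal{G})$ to all of $L_\sigma(\mathcal{G})$. Here I would use property (a) as a localization device. Fix $\xi\in L_\sigma(\mathcal{G})$, $\xi\ge0$, and choose $\Omega_n\uparrow\Omega$ in $\mathcal{G}$ with $\xi1_{\Omega_n}\in L_1(\mathcal{G})$. By (a), $T(\xi)1_{\Omega_n}=T(\xi 1_{\Omega_n})=E[\xi 1_{\Omega_n}\mid\mathcal{G}]=1_{\Omega_n}E[\xi\mid\mathcal{G}]$, where the middle equality is the already-proved $L_1$ case and the last is the pull-out property for the genuine generalized conditional expectation (Theorem \ref{theorem2.1}). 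Letting $n\to\infty$ and using $1_{\Omega_n}\uparrow 1_\Omega$ pointwise $P$-a.s. gives $T(\xi)=E[\xi\mid\mathcal{G}]$. Splitting a general $\xi\in L_\sigma(\mathcal{G})$ into positive and negative parts (both $\sigma$-integrable, since $L_\sigma(\mathcal{G})$ is a Riesz space by Lemma \ref{lemma2.1}) and using linearity finishes the proof.

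The main obstacle I anticipate is the passage through Douglas' theorem: one must be careful that the restriction of $T$ to $L_1(\mathcal{G})$ genuinely satisfies \emph{all} the hypotheses of Theorem \ref{theorem2.2} on the probability space $(\Omega,\mathcal{G},P)$ — in particular that it maps $L_1(\mathcal{G})$ into itself (this is exactly hypothesis (b), so it is available) and is a projection there (which needs $T(L_1(\mathcal{G}))\subset L_1(\mathcal{G})$ together with $T^2=T$). Everything else is bookkeeping with the monotone localizing sequences $\Omega_n$, for which the $\sigma$-Dedekind completeness from Lemma \ref{lemma2.1} guarantees the relevant suprema exist inside $L_\sigma(\mathcal{G})$.
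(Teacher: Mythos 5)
Your proposal is correct and follows essentially the same route as the paper: both apply Douglas' theorem (Theorem \ref{theorem2.2}) to the restriction of $T$ to $L_1(\mathcal{G})$ furnished by (b) to obtain $T=E[\cdot\mid\mathcal{H}]$ there for some $\mathcal{H}\subset\mathcal{G}$, both use property (a) with $\xi=1$ to force $\mathcal{H}=\mathcal{G}$, and both use property (a) as a localization device to pass from $L_1(\mathcal{G})$ to $L_{\sigma}(\mathcal{G})$. The only difference is cosmetic: you identify $\mathcal{H}=\mathcal{G}$ on $L_1(\mathcal{G})$ first and then extend by an a.e.\ limit along the sets $\Omega_n$, whereas the paper first identifies $T$ with the generalized conditional expectation on $L_{\sigma}(\mathcal{H})$ via the uniqueness in Equation (\ref{1.2}) and then shows $\mathcal{H}=\mathcal{G}$; the same implicit assumption (that the localized variables $\xi 1_{\Omega_n}$, respectively $\xi 1_A$, belong to $L_1(\mathcal{G})$) is made in both arguments.
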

\begin{proof}
For a given sub-$\sigma$-field $\mathcal{G}$ of $\mathcal{F}$, we will use $E_g[\cdot \mid \mathcal{G}]$ to denote the generalized conditional expectation operator and $E_o[\cdot \mid \mathcal{G}]$ to denote the ordinary conditional expectation operator.

(i) $\Longrightarrow$ (ii). Suppose statement (i) holds. Then property (b) and Theorem \ref{theorem2.2} imply that there exists a sub-$\sigma$-field $\mathcal{H}\subset\mathcal{G}$ such that $T=E_o[\cdot \mid \mathcal{H}]$ on $L_1(\mathcal{H})$. Take any $\xi\in L_{\sigma}(\mathcal{H})$ and a set $A\in \mathcal{C}$, where the class $\mathcal{C}$ is defined in Equation (\ref{1.1}). Then $\xi\in L_{\sigma}(\mathcal{G})$ and $A\in\mathcal{G}$; hence we have $T(\xi)1_A=T(\xi1_A)=E_o[\xi1_A \mid \mathcal{H}]$. It follows that $E[T(\xi)1_A]=E[\xi1_A]$.
Since $T(\xi)$ is $\mathcal{H}$-measurable by property (a), the uniqueness of generalized conditional expectations implies $T(\xi)=E_g[\xi\mid \mathcal{H}]$, $P$-a.s., i.e., $T=E_g[\cdot \mid \mathcal{H}]$ on $L_{\sigma}(\mathcal{H})$. Next, take any $B\in \mathcal{G}$. Since $T=E_o[\cdot \mid \mathcal{H}]$ on $L_1(\mathcal{H})$, $T(1_B)$ is $\mathcal{H}$-measurable. Applying property (a) with $\xi=1$, we see that $1_B$ is $\mathcal{H}$-measurable, implying $\mathcal{H}=\mathcal{G}$. Therefore, we have $T=E_g[\cdot\mid \mathcal{G}]$.

(ii) $\Longrightarrow$ (i). Assume statement (ii) holds. We need to show that $T$ satisfies properties (a) and (b). Property (a) follows from the smoothing property of generalized conditional expectations. Property (b) follows from Theorem \ref{theorem2.2}.
\end{proof}
\noindent \textbf{Remark.} For a given sub-$\sigma$-field $\mathcal{G}$, the domain of $E_o[\cdot\mid \mathcal{G}]$ is always $L_1(\mathcal{F})$ regardless of the choice of $\mathcal{G}$. However, the domain of $E_g[\cdot\mid \mathcal{G}]$ is $L_{\sigma}(\mathcal{G})$ which depends on the choice of $\mathcal{G}$. This explains why the form of Theorem \ref{theorem2.3} is slightly different from that of Theorem \ref{theorem2.2}.

\section{Characterization of optional projections}
First, we recall a well-known result from the general theory of stochastic process (cf. \cite{HWY}). It is essentially an existence theorem of optional projections.
\begin{theorem}\label{theorem3.0}
Let $X$ be a measurable process such that the random variable $X_T1_{[T<\infty]}$ is $\sigma$-integrable w.r.t. $\mathcal{F}_T$ for every $T\in\mathcal{T}$. Then there exists a unique optional process $^oX$ such that for every $T\in\mathcal{T}$,
\begin{equation}\label{3.0}
E[X_T1_{[T<\infty]}\mid \mathcal{F}_T]={^oX}_T1_{[T<\infty]}.
\end{equation}
In this case, we say the optional projection $^oX$ of $X$ exists and is finite.
\end{theorem}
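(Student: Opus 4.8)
The plan is to prove uniqueness with the optional section theorem and to prove existence in two stages: a functional monotone class argument that reduces the construction of $^oX$ to a generating family of bounded processes built from martingales, followed by a monotone passage from bounded processes to the $\sigma$-integrable processes of the hypothesis. For uniqueness, if $Y$ and $Y'$ are optional processes both satisfying (\ref{3.0}), then $Y_T1_{[T<\infty]}=Y'_T1_{[T<\infty]}$ $P$-a.s. for every $T\in\mathcal{T}$; the process-set $\{Y\neq Y'\}$ is optional, so were it not $P$-evanescent the optional section theorem (cf. \cite{DM}, \cite{HWY}) would produce a $T\in\mathcal{T}$ with $P(T<\infty)>0$ whose graph meets $\{Y\neq Y'\}$, contradicting the displayed identity; hence $Y$ and $Y'$ are $P$-indistinguishable. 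The same section argument shows that an optional process which is a.s. finite at every $T\in\mathcal{T}$ is a.s. finite as a process, a fact used twice below.

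For existence, first let $X$ be bounded, so that everything in sight is integrable and (\ref{3.0}) involves only ordinary conditional expectations. Let $\mathcal{H}$ be the collection of bounded measurable processes admitting a bounded optional process $^oX$ satisfying (\ref{3.0}). By linearity of conditional expectation $\mathcal{H}$ is a vector space, and the constant process $1$ lies in $\mathcal{H}$ with $^o1=1$. Given $A\in\mathcal{F}$ and $u\geq0$, the process $(\omega,t)\mapsto1_A(\omega)1_{]u,\infty[}(t)$ belongs to $\mathcal{H}$: if $M$ is a c\`adl\`ag version of the uniformly integrable martingale $(E[1_A\mid\mathcal{F}_t])_{t\geq0}$, then the optional process $^oX_t:=M_t1_{]u,\infty[}(t)$ satisfies, for each $T\in\mathcal{T}$, $^oX_T1_{[T<\infty]}=1_{[u<T<\infty]}M_T=E[1_A1_{[u<T<\infty]}\mid\mathcal{F}_T]$, using that $[u<T<\infty]\in\mathcal{F}_T$, the inclusion $\mathcal{F}_T\subset\mathcal{F}_\infty$, and the optional stopping theorem; the constant-in-$t$ processes $1_A$ are handled the same way. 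If $X^n\in\mathcal{H}$ with $0\leq X^n\uparrow X$ bounded, then for each $T$ the conditional expectations $E[X^n_T1_{[T<\infty]}\mid\mathcal{F}_T]$ increase to $E[X_T1_{[T<\infty]}\mid\mathcal{F}_T]$, so by the section theorem $({}^oX^n)_n$ is increasing off an evanescent set and $^oX:=\sup_n{}^oX^n$ is a bounded optional process satisfying (\ref{3.0}); thus $\mathcal{H}$ is closed under bounded monotone limits. Since the processes $1_A1_{]u,\infty[}$ together with the $1_A$ form a multiplicative class generating $\mathcal{F}\times\mathcal{B}(R_+)$, the functional monotone class theorem gives that $\mathcal{H}$ contains every bounded measurable process.

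To finish, pass to the $\sigma$-integrable case. It suffices to treat $X\geq0$: for general $X$, both $X^+_T1_{[T<\infty]}$ and $X^-_T1_{[T<\infty]}$ are dominated by $|X_T1_{[T<\infty]}|$, hence are $\sigma$-integrable with respect to $\mathcal{F}_T$ (the ideal property used in the proof of Lemma \ref{lemma2.1}), and one sets $^oX:={}^o(X^+)-{}^o(X^-)$. For $X\geq0$, apply the bounded case to $X\wedge n\uparrow X$; as before the processes ${}^o(X\wedge n)$ increase off an evanescent set to an $[0,\infty]$-valued optional process $Z$, and by the monotone convergence theorem for generalized conditional expectations, $Z_T1_{[T<\infty]}=\lim_nE[(X\wedge n)_T1_{[T<\infty]}\mid\mathcal{F}_T]=E[X_T1_{[T<\infty]}\mid\mathcal{F}_T]$, which is a.s. finite because $X_T1_{[T<\infty]}$ is $\sigma$-integrable with respect to $\mathcal{F}_T$. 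By the finiteness remark from the uniqueness step, $Z$ is a.s. finite, and after modifying it on an evanescent set we obtain the required optional projection $^oX$, which satisfies (\ref{3.0}) by linearity of the generalized conditional expectation.

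The step I expect to be the main obstacle is the bounded case, and within it the closure of $\mathcal{H}$ under bounded monotone limits: this is exactly where the optional section theorem is needed to upgrade agreement (or monotonicity) of the projections at every stopping time to agreement (or monotonicity) of the processes themselves. Once the section theorem and the standard convergence theorems for (generalized) conditional expectations are in hand, the remaining passages are routine.
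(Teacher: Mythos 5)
The paper offers no proof of Theorem \ref{theorem3.0} to compare against: it is stated as a recalled result and attributed to \cite{HWY}. Your argument is correct and is essentially the standard proof behind that citation (cf.\ also \cite{DM}): uniqueness and the finiteness upgrade via the optional section theorem; existence for bounded processes by a functional monotone class argument over the multiplicative family of processes $1_A 1_{]u,\infty[}$ and $1_A$, with the projection of a generator built from a c\`adl\`ag version of the martingale $(E[1_A\mid\mathcal{F}_t])_{t\ge 0}$ and optional sampling (your use of $\mathcal{F}_T\subset\mathcal{F}_\infty$ to handle $A\in\mathcal{F}\setminus\mathcal{F}_\infty$ is the right care to take); and the passage to the $\sigma$-integrable case by truncation, monotone convergence for generalized conditional expectations, and the ideal property for $X^{\pm}$. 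One phrase worth tightening: in the uniqueness step the section theorem yields a stopping time $T$ with $P(T<\infty)>0$ whose graph is \emph{contained} in the optional set $\{Y\neq Y'\}$ on $[T<\infty]$, which is exactly what the contradiction requires; saying the graph ``meets'' the set slightly understates the statement you are invoking, though the argument is unaffected.
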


To characterize the optional projection, we define
\begin{eqnarray*}
L_m &=& \{X\mid \text{ $X$  is a measurable process}\},\\
L_{o}&=& \{X\mid \text{ $X$  is an optional process}\},\\
L_{op} &=& \{X\mid \text{ $X\in L_m$ and $X_T1_{[T<\infty]}$ is $\sigma$-integrable w.r.t $\mathcal{F}_T$ for every $T\in\mathcal{T}$}\}.
\end{eqnarray*}
Since the optional $\sigma$-field is contained in the progressive $\sigma$-field and a progressive process is measurable, we have $L_{o}\subset L_{m}$.

\begin{lemma}\label{lemma3.1}
$L_{op}$  is a super Dedekind complete Riesz space.
\end{lemma}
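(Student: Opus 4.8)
The plan is to follow the strategy already used for Lemma~\ref{lemma2.1}: realize $L_{op}$ as an ideal (solid vector subspace) of a super Dedekind complete Riesz space, and then invoke the standard fact that an ideal of a super Dedekind complete Riesz space is itself super Dedekind complete. The natural ambient space is $L_m$, the space of all (c\`adl\`ag) measurable processes, equipped with the pointwise order and the identification of $P$-indistinguishable processes.

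First I would check that $L_m$ is a Riesz space and that $L_{op}$ is a Riesz subspace of it. Sums, scalar multiples, and pairwise suprema and infima of measurable c\`adl\`ag processes are again measurable and c\`adl\`ag, so $L_m$ is a Riesz space under the pointwise operations. For $X,Y\in L_{op}$ and any $T\in\mathcal T$ one has $|(X+Y)_T|1_{[T<\infty]}\le |X_T|1_{[T<\infty]}+|Y_T|1_{[T<\infty]}$ and $|(X\vee Y)_T|1_{[T<\infty]}\le |X_T|1_{[T<\infty]}+|Y_T|1_{[T<\infty]}$; since a sum of two random variables that are $\sigma$-integrable with respect to $\mathcal F_T$ is again $\sigma$-integrable with respect to $\mathcal F_T$ (intersect the two exhausting sequences), and since $\sigma$-integrability with respect to $\mathcal F_T$ is inherited by dominated random variables, it follows that $X+Y\in L_{op}$ and $X\vee Y\in L_{op}$; closure under scalar multiplication is immediate. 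Next I would verify solidity: if $X\in L_m$, $Y\in L_{op}$ and $|X|\le|Y|$ up to evanescence, then for every $T\in\mathcal T$ we have $|X_T|1_{[T<\infty]}\le|Y_T|1_{[T<\infty]}$ $P$-a.s., and domination again preserves $\sigma$-integrability with respect to $\mathcal F_T$, so $X\in L_{op}$. Hence $L_{op}$ is an ideal of $L_m$, and consequently $L_{op}$ is super Dedekind complete as soon as $L_m$ is.

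The remaining, and main, point is precisely that $L_m$ is super Dedekind complete, and this is the step I expect to be the real obstacle. It is genuinely more delicate than the corresponding step in Lemma~\ref{lemma2.1}: the equivalence relation here is $P$-indistinguishability (evanescence), which is finer than a.e.\ equality with respect to any single $\sigma$-finite measure on $\Omega\times R_+$, so one cannot simply quote super Dedekind completeness of an $L_0$-space as in the proof of Lemma~\ref{lemma2.1}. The route I would attempt is to reduce an arbitrary family bounded above to a countable increasing one — using super Dedekind completeness of $L_0(\mathcal F)$ slice-by-slice along a countable dense set of times, together with a section theorem for measurable processes to glue the countably many exceptional $P$-null sets into a single evanescent set — and then to produce the supremum as a suitable (right-continuous) regularization of the pointwise supremum of that sequence, the section theorem being what guarantees that the regularized process dominates the original family up to evanescence and is least among c\`adl\`ag upper bounds. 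Making this last step rigorous — showing the regularized object is again a measurable c\`adl\`ag process and is truly the least upper bound — is where the essential work of the lemma lies, and it is the part I would expect to require the most care.
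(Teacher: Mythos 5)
Your skeleton is exactly the paper's: show $L_{op}$ is a Riesz subspace of $L_m$ and solid in it, then quote the fact that an ideal of a super Dedekind complete Riesz space is super Dedekind complete. That part of your write-up is fine (your domination and sum-of-exhausting-sequences arguments are, if anything, a bit more careful than the paper's). The genuine gap is the step you yourself flag as the main point: you never prove that the ambient space $L_m$ is super Dedekind complete. The paper disposes of this in one line by viewing measurable processes as elements of $L_0\bigl(\Omega\times R_+,\ \mathcal{F}\times\mathcal{B}(R_+),\ P\times\mu_L\bigr)$ with $\mu_L$ Lebesgue measure; since $P\times\mu_L$ is $\sigma$-finite, this $L_0$ space is super Dedekind complete, exactly as $L_0(\mathcal{G})$ was used in Lemma~\ref{lemma2.1}. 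You reject this route because the paper's stated identification is $P$-indistinguishability, which is strictly finer than $P\times\mu_L$-a.e.\ equality (a legitimate observation about the paper's setup), and you propose instead a reduction along a countable dense set of times, a section theorem, and a right-continuous regularization --- but you explicitly leave the decisive step (that the regularized process is a measurable upper bound and is \emph{least}) unproved, so the proposal is not a proof of Lemma~\ref{lemma3.1}.

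Moreover, the replacement plan is doubtful as described. In the order modulo evanescence, existence of least upper bounds is genuinely delicate: subtracting the indicator of the graph of a measurable map $t\mapsto\omega(t)$ from a candidate supremum changes it only on sections that are $P$-null for each fixed $t$, hence still yields an upper bound of a family such as $\{1_{\Omega\times\{t\}}\}$, yet differs from the candidate on a non-evanescent set; so the \emph{natural} candidates (pointwise suprema along countable dense time sets, or their right limits) need not be least, and a supremum of c\`adl\`ag processes need not be c\`adl\`ag, so the regularization cannot simply be asserted. The intended and simplest repair is the paper's: for this lemma read the order and the identification $P\times\mu_L$-a.e., i.e., work in $L_0$ of the $\sigma$-finite product measure, where super Dedekind completeness is standard; your solidity argument then completes the proof verbatim. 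If you insist on the evanescence-quotient order, you must supply the missing existence argument for suprema, which your sketch does not do.
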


\begin{proof}
Let $\mu_L$ denote the Lebesgue measure on $R$. The product measure $P\times \mu_L$ on $(\Omega\times R_+, \mathcal{F}\times\mathcal{B}(R_+))$ is evidently $\sigma$-finite. Thus, $L_{m}$ is a super Dedekind complete Riesz space.

For any two processes $X$ and $Y$ in $L_{op}$, $X$ and $Y$ are both measurable mappings from $(\Omega\times R_+, \mathcal{F}\times\mathcal{B}(R_+))$ to $(R, \mathcal{B}(R))$. Therefore, $X\wedge Y$ is also a measurable mapping from $(\Omega\times R_+, \mathcal{F}\times\mathcal{B}(R_+))$ to $(R, \mathcal{B}(R))$, that is, $X\wedge Y$ is a measurable process. Moreover, $X_T1_{[T<\infty]}$ and $Y_T1_{[T<\infty]}$ are both $\sigma$-integrable with respect to $\mathcal{F}_T$ for all $T\in\mathcal{T}$. It follows from
\begin{equation*}
(X\wedge Y)_T1_{[T<\infty]}\leq X_T 1_{[T<\infty]}+Y_T1_{[T<\infty]}
\end{equation*}
that $(X\wedge Y)_T1_{[T<\infty]}$ is also $\sigma$-integrable with respect to $\mathcal{F}_T$. Thus, $L_{op}$ is a Riesz subspace of $L_m$.

Next, consider two processes $X$ and $Y$ in $L_m$ such that $|X|\leq |Y|$ and $Y\in L_{op}$. Let $T$ be a stopping time. Then $Y_T1_{[T<\infty]}$ is $\sigma$-integrable with respect to $\mathcal{F}_T$; hence $X$ is $\sigma$-integrable with respect to $\mathcal{F}_T$ too. This implies that $L_{op}$ is an ideal of $L_m$. Hence, $L_{op}$ is super Dedekind complete.
\end{proof}

For every process $X$ in $L_{op}$, its optional projection $^oX$ exists and is finite. Thus, $T: X\mapsto {^oX}$ is a linear operator from $L_{op}$ to $L_{m}$. Indeed, $T$ is a projection on $L_{op}$ as we shall see next.

\begin{theorem}\label{theorem3.1}
The linear operator $T: X\mapsto  {^oX}$ is an order-continuous strictly positive Markov projection on $L_{op}$. Moreover, $T$ is also an averaging operator.
\end{theorem}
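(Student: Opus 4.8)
The plan is to transcribe the proof of Theorem~\ref{theorem2.1}, with the family $\mathcal{T}$ of stopping times playing the role of the class $\mathcal{C}$ there, the defining identity (\ref{3.0}) playing the role of (\ref{1.2}), and the optional section theorem (cf.\ \cite{HWY}) supplying the uniqueness that $\mathcal{G}$-measurability supplied before. I use freely Theorem~\ref{theorem3.0}, Lemma~\ref{lemma3.1}, the properties of generalized conditional expectations from Theorem~\ref{theorem2.1}, and their elementary calculus (linearity, ``pull out what is known'', strict positivity, dominated convergence; cf.\ Section~I.4 of \cite{HWY}). First, $T$ maps $L_{op}$ into $L_{op}$: $^{o}X$ is optional, and since $({}^{o}X)_{T}1_{[T<\infty]}=E[X_{T}1_{[T<\infty]}\mid\mathcal{F}_{T}]$ is $\sigma$-integrable w.r.t.\ $\mathcal{F}_{T}$ for every $T\in\mathcal{T}$, we have $^{o}X\in L_{op}$; linearity of $T$ is immediate from (\ref{3.0}) and the uniqueness in Theorem~\ref{theorem3.0}. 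If $Y\in L_{op}$ is optional, then $Y_{T}1_{[T<\infty]}$ is $\mathcal{F}_{T}$-measurable, hence equals $E[Y_{T}1_{[T<\infty]}\mid\mathcal{F}_{T}]$, so $^{o}Y=Y$ by uniqueness; applying this to $Y={}^{o}X$ gives $T^{2}=T$. If $X\ge 0$ then $X_{T}1_{[T<\infty]}\ge 0$, so $({}^{o}X)_{T}1_{[T<\infty]}\ge 0$ a.s.\ for every $T$, and since $\{{}^{o}X<0\}$ is an optional set the optional section theorem forces it to be evanescent; thus $T\ge 0$. Finally $^{o}1=1$ because $[T<\infty]\in\mathcal{F}_{T}$ gives $E[1_{[T<\infty]}\mid\mathcal{F}_{T}]=1_{[T<\infty]}$ and the constant process $1$ is optional. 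Hence $T$ is a Markov projection.

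For strict positivity, suppose $X\ge 0$ in $L_{op}$ with $^{o}X$ evanescent. Using the deterministic stopping times $T\equiv q$ for nonnegative rationals $q$, we get $E[X_{q}\mid\mathcal{F}_{q}]=({}^{o}X)_{q}=0$ a.s., and strict positivity of the generalized conditional expectation (Theorem~\ref{theorem2.1}) forces $X_{q}=0$ a.s.\ for every such $q$; by right-continuity of paths $X$ is then evanescent. Contrapositively, $X>0$ implies $^{o}X>0$.

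For order-continuity, recall from Lemma~\ref{lemma3.1} that $L_{op}$ is super Dedekind complete, so it is enough to show that $X_{n}\downarrow 0$ in $L_{op}$ implies $TX_{n}\downarrow 0$. By positivity $({}^{o}X_{n})_{n}$ decreases; put $Y:=\inf_{n}{}^{o}X_{n}\ge 0$, an optional process. Fix $T\in\mathcal{T}$. Outside a $P\times\mu_{L}$-null set the paths decrease to $0$, hence (by right-continuity) outside an evanescent set, so $(X_{n})_{T}1_{[T<\infty]}\downarrow 0$ a.s.; since $(X_{1})_{T}1_{[T<\infty]}\in L_{\sigma}(\mathcal{F}_{T})$, the dominated convergence theorem for generalized conditional expectations gives $({}^{o}X_{n})_{T}1_{[T<\infty]}=E[(X_{n})_{T}1_{[T<\infty]}\mid\mathcal{F}_{T}]\downarrow 0$ a.s., whence $Y_{T}1_{[T<\infty]}=0$ a.s. As $Y$ is optional and this holds for every $T$, the optional section theorem shows $Y$ is evanescent, i.e., $^{o}X_{n}\downarrow 0$ in $L_{op}$; the countable sup property then upgrades this to order-continuity of $T$, exactly as in the proof of Theorem~\ref{theorem2.1}.

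For the averaging property, first record the ``pull out what is known'' rule for optional projections: if $Z$ is optional, $W\in L_{op}$ and $ZW\in L_{op}$, then $^{o}(ZW)=Z\,{}^{o}W$. Indeed, $Z_{T}1_{[T<\infty]}$ is $\mathcal{F}_{T}$-measurable, so $E[(ZW)_{T}1_{[T<\infty]}\mid\mathcal{F}_{T}]=Z_{T}1_{[T<\infty]}\,E[W_{T}1_{[T<\infty]}\mid\mathcal{F}_{T}]=(Z\,{}^{o}W)_{T}1_{[T<\infty]}$ for every $T$ by the corresponding property of the generalized conditional expectation, and $Z\,{}^{o}W$ is optional, so uniqueness applies. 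Now take $X,Y\in L_{op}$ with $Y\,{}^{o}X\in L_{op}$ and apply the rule with $Z={}^{o}X$ and $W=Y$: $T(Y\,T(X))={}^{o}(Y\,{}^{o}X)={}^{o}X\cdot{}^{o}Y=T(Y)\,T(X)$, so $T$ is an averaging operator. The step I expect to be the main obstacle is order-continuity: making rigorous the passage from order convergence $X_{n}\downarrow 0$ in the Riesz space $L_{op}$ to $P$-a.s.\ convergence of the evaluations $(X_{n})_{T}1_{[T<\infty]}$ along \emph{every} stopping time $T$ (this is precisely where right-continuity of paths upgrades a $P\times\mu_{L}$-negligible exceptional set to an evanescent one), and then eliminating $\inf_{n}{}^{o}X_{n}$ through the section theorem; the remaining steps are a faithful copy of the argument for Theorem~\ref{theorem2.1}.
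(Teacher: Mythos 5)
Most of your argument is the paper's own proof of Theorem \ref{theorem3.1} in slightly more detail: the invariance of $L_{op}$, idempotence via optionality of ${}^oX$ plus uniqueness in Theorem \ref{theorem3.0}, $T(1)=1$, positivity, strict positivity through deterministic times and right-continuity, and the averaging property via the ``pull out what is known'' rule all match the paper (your use of the section theorem where the paper simply says this is clear from (\ref{3.0}) is a harmless refinement). The genuine problem is the order-continuity step, exactly the one you flagged. The bridge ``outside a $P\times\mu_L$-null set the paths decrease to $0$, hence (by right-continuity) outside an evanescent set'' is false: right-continuity of each $X_n$ does not pass to the decreasing limit. Take $X_n(\omega,t)=(1-n|t-t_0|)^{+}$ for a fixed $t_0>0$; each $X_n$ is deterministic, continuous and bounded, hence optional and in $L_{op}$, the sequence decreases, and $\inf_n X_n=1_{\Omega\times\{t_0\}}$ is $P\times\mu_L$-null but not evanescent. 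In this example $(X_n)_{t_0}\equiv 1$, so your claim that $(X_n)_T1_{[T<\infty]}\downarrow 0$ a.s.\ for \emph{every} stopping time fails (take $T\equiv t_0$), and $Y=\inf_n{}^oX_n=\inf_n X_n$ is not evanescent, so the conclusion you draw from the section theorem is also false --- even though the desired conclusion, ${}^oX_n\downarrow 0$ in $L_{op}$ understood modulo $P\times\mu_L$, does hold.

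The step can be repaired without the section theorem and without any right-continuity of the limit. If order convergence in $L_{op}$ is read modulo $P\times\mu_L$ (the reading forced by the proof of Lemma \ref{lemma3.1}), then by Fubini, for $\mu_L$-a.e.\ $t$ one has $(X_n)_t\downarrow 0$ $P$-a.s.; the dominated (or monotone) convergence theorem for generalized conditional expectations, with dominating variable $(X_1)_t\in L_{\sigma}(\mathcal{F}_t)$, gives $({}^oX_n)_t=E[(X_n)_t\mid\mathcal{F}_t]\downarrow 0$ a.s.\ for those $t$, and a second application of Fubini yields $\inf_n{}^oX_n=0$ $P\times\mu_L$-a.e., i.e.\ $TX_n\downarrow 0$. (The paper's own proof quietly reads $X^n\downarrow 0$ as pointwise convergence up to evanescence, under which your evaluation along stopping times is immediate; but that reading sits uneasily with Lemma \ref{lemma3.1}, and in either reading the inference ``$P\times\mu_L$-null $\Rightarrow$ evanescent by right-continuity'' applied to the limit process must be dropped.) Your strict-positivity paragraph uses right-continuity only for the single c\`adl\`ag processes $X$ and ${}^oX$ themselves, where it is legitimate, so no change is needed there.
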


\begin{proof}
Take a process $X$ in $L_{op}$ and a stopping time $T$ in $\mathcal{T}$. Then $X_T1_{[T<\infty]}$ is $\sigma$-integrable with respect to $\mathcal{F}_T$. Thus, there exists an increasing sequence $\{\Omega_n\}$ of sets in $\mathcal{F}_T$ such that $\Omega_n \uparrow \Omega$ and $X_T1_{[T<\infty]}1_{\Omega_n}$ is integrable for each $n$. It follows from (\ref{3.0}) and Jensen's inequality that
\begin{eqnarray*}
& & E|{^oX}1_{[T<\infty]}1_{\Omega_n}| \\
&=& E\left[|E[X_T1_{[T<\infty]}\mid \mathcal{F}_T1_{\Omega_n}]|\right]\\
&\leq& E\left[E|[X_T 1_{[T<\infty]}1_{\Omega_n}|\mid \mathcal{F}_T]|\right]\\
&=& E[|X_T1_{[T<\infty]}1_{\Omega_n}|]<\infty.
\end{eqnarray*}
This shows that $^oX$ is $\sigma$-integrable with respect to $\mathcal{F}_T$; hence $L_{op}$ is $T$-invariant, i.e., $T(X)\in L_{op}$.

Moreover, for every $X$ in $L_{op}$ we have
\begin{eqnarray*}
& & E[{^oX}1_{[T<\infty]}\mid \mathcal{F}_T] \\
&=& E[X_T1_{[T<\infty]}\mid \mathcal{F}_T]\\
&=& {^oX}_T1_{[T<\infty]}.
\end{eqnarray*}
This shows $^o(^oX)={^oX}$, that is $T(T(X))=T(X)$. Hence $T$ is a projection on $L_{op}$.

It is clear from (\ref{3.0}) that $T(1)=1$, i.e., $^o1=1$ and $T$ is a positive operator. To see $T$ is strictly positive, we assume $X>0$ and $^oX=0$. Then for every stopping time $T$ (in particular, for every $T=t>0$), (\ref{3.0}) shows that
\begin{equation*}
E[X_T1_{[T<\infty]}]=0,
\end{equation*}
contradicting the hypothesis $X>0$.

Next, we show $T$ is order-continuous. By Lemma \ref{lemma3.1}, it suffices to show that $X^n\downarrow 0$ implies $T(X^n)\downarrow 0$, that is, $X^n\downarrow 0$ implies ${^oX^n}\downarrow 0$. To this end, let $(X^n)$ be a sequence in $L_{op}$ such that $X^n\downarrow 0$. For any $T\in \mathcal{T}$, we apply the monotone convergence theorem for generalized conditional expectation to the sequence $X^1-X^n$ to conclude
\begin{equation*}
\lim_{n\rightarrow \infty} {^oX}^n_T1_{[T<\infty]}=\lim_{n\rightarrow \infty}E[X^n_T1_{[T<\infty]}\mid\mathcal{F}_T]=E\left[\lim_{n\rightarrow\infty} X_T^n1_{[T<\infty]}\mid \mathcal{F}_T\right]=0.
\end{equation*}

Finally, we show $T$ is an averaging operator. Let $X$ and $Y$ be two processes in $L_{op}$ such that $YT(X)=Y{^oX}\in L_{op}$. Then the smoothness of optional projections implies
\begin{equation*}
{^o(Y{^oX})}=(^oY)(^o X),
\end{equation*}
that is, $T(YT(X))=T(Y)T(X)$.
\end{proof}
\noindent \textbf{Remark.} Alternatively, we can see that $T$ is a projection on $L_{op}$ as follows. Since $T(X)$ is optional, it is progressively measurable; hence $(T(X))_S1_{[S<\infty]}$ is $\mathcal{F}_S$-measurable for any $S\in \mathcal{T}$. It follows that $(T(X))_S1_{[S<\infty]}$ is $\sigma$-integrable w.r.t. $\mathcal{F}_S$, implying that $T(X)\in L_{op}$. This also shows that $L_o\subset L_{op}$. \\

The next theorem characterizes the optional projection.

\begin{theorem}[Characterization of optional projections]\label{theorem3.2}
For any linear operator $T: L_{op}\rightarrow L_{op}$, the following two statements are equivalent.
\begin{enumerate}
  \item [(1)]$T$ is the optional projection operator $T: X\mapsto {^oX}$.
  \item [(2)]$T$ is an order-continuous Markov projection on $L_{op}$.
\end{enumerate}
\end{theorem}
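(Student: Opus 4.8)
The plan is to prove the equivalence of (1) and (2) in Theorem~\ref{theorem3.2}, with the forward implication (1)$\Rightarrow$(2) already essentially established by Theorem~\ref{theorem3.1}, so the substantive content is (2)$\Rightarrow$(1). The overall strategy mirrors the proof of Theorem~\ref{theorem2.3}: use a Douglas-type result to first pin down that $T$ is an ordinary conditional expectation against a fixed time, then ``glue'' these together over all stopping times and invoke uniqueness of the optional projection from Theorem~\ref{theorem3.0}.

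First I would show (1)$\Rightarrow$(2). This is immediate: Theorem~\ref{theorem3.1} already says the optional projection operator is an order-continuous (strictly positive) Markov projection on $L_{op}$, so in particular it is an order-continuous Markov projection. For (2)$\Rightarrow$(1), suppose $T: L_{op}\to L_{op}$ is an order-continuous Markov projection. I would fix a deterministic time $t\ge 0$ and restrict attention to processes of the special form $X^{(\xi)} = \xi\, 1_{\{t\}\times\Omega}$... actually cleaner: fix $t$ and consider the ``slice at $t$'' map. For a random variable $\xi\in L_\sigma(\mathcal F_t)$ (or more carefully $L_1(\mathcal F_\infty)$ restricted appropriately) associate the constant-in-time process $\tilde\xi_s\equiv\xi$, which lies in $L_{op}$ since $\xi$ is $\mathcal F_\infty$-measurable and the slice at any stopping time is integrable after $\sigma$-truncation. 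Evaluating $T(\tilde\xi)$ at the deterministic stopping time $s=t$ gives a map $\xi\mapsto (T\tilde\xi)_t$ on $L_1(\mathcal F_\infty)$; order-continuity plus the Markov projection property, transported through Lemma~\ref{lemma3.1} and the structure of $L_{op}$, should make this map a contractive Markov projection on $L_1(\mathcal F_\infty)$, hence by Theorem~\ref{theorem2.2} an ordinary conditional expectation $E_o[\cdot\mid\mathcal H_t]$ for some sub-$\sigma$-field $\mathcal H_t\subset\mathcal F_\infty$. One then identifies $\mathcal H_t=\mathcal F_t$ by a measurability argument analogous to the one in Theorem~\ref{theorem2.3} (apply the projection to indicators and use that $T$ maps into optional, hence $\mathcal F_t$-adapted, processes). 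Finally, to handle general stopping times $S$ and conclude that $(TX)_S 1_{[S<\infty]}=E[X_S1_{[S<\infty]}\mid\mathcal F_S]$, I would use the section theorem / the fact that an optional process is determined by its values at stopping times: since $TX$ is optional and agrees with $^oX$ on a class of stopping times rich enough to separate optional processes (or by reducing a general $S$ to a monotone limit of countably-valued stopping times and using order-continuity via monotone convergence for generalized conditional expectations), $TX={^oX}$ by the uniqueness clause of Theorem~\ref{theorem3.0}.

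There are a couple of places requiring care. One is making precise which random variables sit inside $L_{op}$ via the embedding $\xi\mapsto\tilde\xi$ and checking that the ``slice at $t$'' operator really is contractive as a map $L_1(\mathcal F_\infty)\to L_1(\mathcal F_\infty)$ — the Markov projection hypothesis on $L_{op}$ is not a priori a norm statement, so one must extract the contractivity from order-continuity together with positivity and $T1=1$ (positive operators fixing the order unit are automatically $L_1$-contractive on the integrable part, by the standard $|T\xi|\le T|\xi|$ estimate). Another is the passage from deterministic times to arbitrary stopping times: the honest route is to first get the identity $(TX)_t = E_o[X_t\mid\mathcal F_t]$ for all deterministic $t$ and all bounded $X\in L_{op}$, then note that an optional process satisfying the optional-section characterization is unique, so the process $TX$, being optional and having the ``right'' deterministic-time marginals, must coincide with $^oX$ — here invoking Theorem~\ref{theorem3.0}'s uniqueness.

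The main obstacle I expect is the step identifying the slice operator as a genuine contractive Markov projection on an honest $L_1$ space so that Theorem~\ref{theorem2.2} applies, and then cleanly transferring the resulting $\sigma$-field identification back up to the level of processes for \emph{all} stopping times simultaneously rather than one deterministic time at a time. In other words, the technical heart is the interplay between the fixed-time reductions and the stopping-time/section-theorem machinery needed to reassemble them into the single identity~(\ref{3.0}); the order-continuity hypothesis is exactly what powers the limiting argument (via the monotone convergence theorem for generalized conditional expectations, as used in Theorem~\ref{theorem3.1}) that lets countably-valued stopping times suffice.
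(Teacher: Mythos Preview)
Your route is genuinely different from the paper's. The paper never invokes Douglas' theorem (Theorem~\ref{theorem2.2}) and never slices at a fixed time to produce an operator on $L_1(\mathcal F_\infty)$. Instead it works directly at the process level: from $T(1)=1$ and linearity/order-continuity it gets $T((\xi)_{t\ge 0})=(E[\xi\mid\mathcal F_t])_{t\ge 0}$ for integrable $\xi$ by approximation with simple random variables; it then uses the \emph{averaging} identity $T(YT(X))=T(Y)T(X)$ to pull out the indicator in an elementary process $\xi\,1_{\llbracket s_1,s_2\llbracket}$, obtaining $T(X)={^oX}$ on such processes; finally it extends to all of $L_{op}$ by monotone limits of simple processes (order-continuity) and the decomposition $X=X^+-X^-$. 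No section theorem, no stopping-time approximation, and no appeal to the uniqueness clause of Theorem~\ref{theorem3.0} is made --- the identity $T(X)={^oX}$ is built constructively.

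Your proposed reduction to Theorem~\ref{theorem2.2} has a real gap precisely where you suspect it. From positivity and $T1=1$ you get $|T\tilde\xi|\le T|\tilde\xi|$, which yields $L_\infty$-contractivity of the slice map, but $L_1$-contractivity would require that the slice not increase integrals (essentially $T^*1\le 1$), and nothing in ``order-continuous Markov projection'' supplies that. So Douglas' theorem is not available without an additional argument, and your assertion that ``positive operators fixing the order unit are automatically $L_1$-contractive'' is false as stated. The paper's approach avoids any norm inequality altogether by leaning on the averaging property instead --- though you should note that the paper uses this property for $T$ under hypothesis~(2) without deriving it from the stated assumptions, so its argument is also not fully self-contained as written; the averaging identity is a \emph{conclusion} of Theorem~\ref{theorem3.1} for the optional projection, not an announced hypothesis in~(2).
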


\begin{proof}
(1)$\Longrightarrow$ (2) follows from Theorem \ref{theorem3.1}. It remains to show (2)$\Longrightarrow$ (1).

Since $T$ is Markov projection on $L_{op}$, $T(1)=1$, where $1$ is the constant process $(1)_{t\geq 0}$. This implies $T(1)=(E[1\mid \mathcal{F}_t])_{t\geq 0}$. Let $\xi$ be any integrable random variable on $(\Omega, \mathcal{F}, P)$. Choose a sequence  $(\xi^n)$ of simple random variables such that $|\xi^n| \leq |\xi|$ and $\xi^n \uparrow\xi$. By the linearity and order-continuity of $T$ as well as the dominated convergence theorem for conditional expectations, we have
\begin{eqnarray*}
T((\xi)_t) &=& \lim_{n\rightarrow \infty} T((\xi^n)_t)\\
           &=&\lim_{n\rightarrow \infty} (E[\xi^n\mid \mathcal{F}_t])_{t\geq 0}\\
           &=&(E[\xi\mid\mathcal{F}_t])_{t\geq 0}.
\end{eqnarray*}

Next, suppose $X\in L_{op}$ and $X=\xi1_{\llbracket s_1, s_2\llbracket}$ where $\xi$ is an integrable random variable, $0\leq s_1<s_2\leq +\infty$ and $\llbracket s_1, s_2\llbracket$ is the stochastic interval from $s_1$ to $s_2$. Since $T$ is an averaging operator, we have
\begin{eqnarray*}
T(X) &=& T(\xi 1_{\llbracket s_1, s_2\llbracket})\\
     &=& 1_{\llbracket s_1, s_2\llbracket} T((\xi)_{t\geq 0}) \\
     &=& 1_{\llbracket s_1, s_2\llbracket}(E[\xi\mid \mathcal{F}_t])_{t\geq 0}.
\end{eqnarray*}
This implies that $T(X)={^oX}$ for every simple process $X$ in $L_{op}$.

If $X\in L_{op}$ and $X\geq 0$, then we may choose a sequence of nonnegative simple process $X^{n}\in L_{op}$ such that $X^n\uparrow X$. Then the order-continuity of $T$ implies
\begin{equation*}
T(X)=\lim_{n\rightarrow \infty} T(X^n)=\lim_{n\rightarrow \infty} {^oX^n}={^oX}. \\
\end{equation*}

For a general $X\in L_{op}$, $X^+$ and $X^-$ are both nonnegative elements of $L_{op}$. Therefore, we have
\begin{equation*}
T(X)=T(X^+-X^-)=T(X^+)-T(X^-)={^o(X^+)}-{^o(X^-)}={^oX}.
\end{equation*}
This establishes that $T(X)={^oX}$ for every $X\in L_{op}$.
\end{proof}

\section{Characterization of predictable projections}
Following the same line of reasoning in the previous section, we can prove a characterization theorem of predictable projections. Hence, we will state the results without proofs.

We put
\begin{eqnarray*}
\mathcal{T}_p &=& \{T\mid \text{ $T$ is a predictable time}\}, \\
L_{p}&=& \{X\mid \text{ $X$  is a predictable process}\},\\
L_{pp} &=& \{X\mid \text{ $X\in L_m$ and $X_T1_{[T<\infty]}$ is $\sigma$-integrable w.r.t $\mathcal{F}_{T-}$ for every $T\in\mathcal{T}_p$}\}.\\
\end{eqnarray*}

Recall the following result from the general theory of stochastic processes (cf. \cite{HWY}).
\begin{theorem}\label{theorem4.0}
Let $X$ be a measurable process such that the random variable $X_T1_{[T<\infty]}$ is $\sigma$-integdrable w.r.t. $\mathcal{F}_{T-}$ for every $T\in\mathcal{T}_p$. Then there exists a unique predictable process $^pX$ such that for every $T\in\mathcal{T}_p$,
\begin{equation}\label{4.0}
E[X_T1_{[T<\infty]}\mid \mathcal{F}_{T-}]={^pX}_T1_{[T<\infty]}.
\end{equation}
In this case, we say the predictable projection $^pX$ of $X$ exists and is finite.
\end{theorem}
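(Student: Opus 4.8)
The plan is to imitate the proof of the optional version, Theorem~\ref{theorem3.0}, replacing stopping times by predictable times and $\mathcal{F}_T$ by $\mathcal{F}_{T-}$, and to reduce the $\sigma$-integrable statement to the case of a bounded measurable $X$. Uniqueness and finiteness of $^pX$ would come from the predictable section theorem: if $Y$ and $Y'$ are predictable processes both satisfying~(\ref{4.0}), then $Y_T1_{[T<\infty]}=Y'_T1_{[T<\infty]}$ $P$-a.s.\ for every $T\in\mathcal{T}_p$, so the predictable set $\{Y\neq Y'\}$ is met by no predictable time on a set of positive probability and is therefore $P$-evanescent; likewise, since the $\sigma$-integrability hypothesis forces the generalized conditional expectation on the left of~(\ref{4.0}) to be a finite-valued random variable for each $T\in\mathcal{T}_p$, the predictable set $\{{}^pX=+\infty\}$ must be $P$-evanescent, so $^pX$ may be taken finite. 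For existence, I would first split $X=X^+-X^-$ to reduce to $X\geq 0$, then truncate: for $X^{(n)}:=X\wedge n$, a bounded measurable process, the projection ${}^pX^{(n)}$ will be constructed below, these increase in $n$ by positivity of the construction, and ${}^pX:=\sup_n{}^pX^{(n)}$ is again predictable (a countable supremum of predictable processes is predictable) and satisfies~(\ref{4.0}) by the monotone convergence theorem for generalized conditional expectations.

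It then remains to construct $^pX$ for bounded measurable $X$ by a functional monotone class argument. Let $\mathcal{H}$ be the collection of bounded measurable processes admitting a predictable projection satisfying~(\ref{4.0}); it is a vector space containing the constants and closed under bounded increasing limits, so it suffices to check that it contains the multiplicative generating family of processes $X(\omega,t)=\xi(\omega)f(t)$ with $\xi$ bounded $\mathcal{F}$-measurable and $f$ bounded Borel. For such an $X$, let $M$ be a c\`adl\`ag version of the uniformly integrable martingale $(E[\xi\mid\mathcal{F}_t])_{t\geq 0}$ and set $Z(\omega,t):=M_{t-}(\omega)\,f(t)$, which is predictable since $M_-$ is left-continuous adapted and $t\mapsto f(t)$ is a deterministic, hence predictable, process. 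For $T\in\mathcal{T}_p$ with announcing sequence $T_n\uparrow T$ one has $\bigvee_n\mathcal{F}_{T_n}=\mathcal{F}_{T-}$, hence by L\'evy's theorem $M_{T-}=\lim_n M_{T_n}=\lim_n E[\xi\mid\mathcal{F}_{T_n}]=E[\xi\mid\mathcal{F}_{T-}]$ on $[T<\infty]$; since $f(T)1_{[T<\infty]}$ is $\mathcal{F}_{T-}$-measurable, this gives $E[X_T1_{[T<\infty]}\mid\mathcal{F}_{T-}]=f(T)1_{[T<\infty]}M_{T-}=Z_T1_{[T<\infty]}$, so $Z={}^pX$ and $X\in\mathcal{H}$. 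The monotone class theorem then yields that $\mathcal{H}$ contains all bounded measurable processes, completing the construction.

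The main obstacle is not the bookkeeping, which is entirely parallel to the optional case, but the two inputs from the general theory used above: the predictable section theorem, which is what upgrades ``agreement at every predictable time'' to ``agreement up to evanescence'' and hence delivers both uniqueness and finiteness, and the identity $\bigvee_n\mathcal{F}_{T_n}=\mathcal{F}_{T-}$ for an announcing sequence of a predictable time, needed so that L\'evy's theorem identifies $M_{T-}$ with $E[\xi\mid\mathcal{F}_{T-}]$. Both are standard facts of the general theory of processes; once they are granted, the argument above assembles the statement.
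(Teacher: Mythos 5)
The paper does not prove this statement at all: Theorem~\ref{theorem4.0}, like its optional counterpart Theorem~\ref{theorem3.0}, is simply recalled from the general theory of stochastic processes (the reference \cite{HWY}), and Section~4 explicitly states its results without proofs. So there is no internal argument to compare against; what you have written is a reconstruction of the standard textbook proof, and it is essentially sound. Your two flagged inputs are exactly the right ones: the predictable section theorem (for uniqueness, for the monotonicity needed in the truncation step, and for discarding the set $\{{}^pX=+\infty\}$ as evanescent, using that the generalized conditional expectation in (\ref{4.0}) is by definition a real-valued random variable), and the identity $\mathcal{F}_{T-}=\bigvee_n\mathcal{F}_{T_n}$ for an announcing sequence, which together with optional sampling for the uniformly integrable martingale $M$ and L\'evy's upward theorem identifies $M_{T-}$ with $E[\xi\mid\mathcal{F}_{T-}]$ on $[T<\infty]$ (the convention $M_{0-}=M_0$, matching the paper's $\mathcal{F}_{0-}=\mathcal{F}_0$, takes care of predictable times that vanish). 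Two small points you should make explicit if you write this out in full: the assertion that ${}^pX^{(n)}$ increases in $n$ is itself a consequence of the section theorem (positivity of the construction only gives the inequality at each predictable time, and the section theorem upgrades it to an inequality up to evanescence), and the final passage to the limit requires the monotone convergence theorem for generalized conditional expectations, which applies precisely because the hypothesis makes $X_T1_{[T<\infty]}$ $\sigma$-integrable with respect to $\mathcal{F}_{T-}$. Neither is a gap, only bookkeeping to be recorded.
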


Similar to Lemma \ref{lemma4.1}, we have the following lemma.

\begin{lemma}\label{lemma4.1}
$L_{pp}$  is a super Dedekind complete Riesz space.
\end{lemma}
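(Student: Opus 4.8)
The plan is to imitate, essentially verbatim, the proof of Lemma~\ref{lemma3.1}, replacing the optional data by the predictable data throughout. First I would observe that the product measure $P\times\mu_L$ on $(\Omega\times R_+,\mathcal{F}\times\mathcal{B}(R_+))$ is $\sigma$-finite, so that $L_m$ is a super Dedekind complete Riesz space; this is exactly the starting point already used in Lemma~\ref{lemma3.1} and needs no change. It then suffices to show that $L_{pp}$ is an ideal of $L_m$, since an ideal of a super Dedekind complete Riesz space is again super Dedekind complete.

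Next I would check that $L_{pp}$ is a Riesz subspace of $L_m$. Linearity is clear. For the lattice operation, take $X,Y\in L_{pp}$. Both are $\mathcal{F}\times\mathcal{B}(R_+)$-measurable, hence so is $X\wedge Y$, so $X\wedge Y$ is a measurable process. Fix a predictable time $T\in\mathcal{T}_p$. By hypothesis $X_T1_{[T<\infty]}$ and $Y_T1_{[T<\infty]}$ are $\sigma$-integrable with respect to $\mathcal{F}_{T-}$, and from
\begin{equation*}
\bigl|(X\wedge Y)_T1_{[T<\infty]}\bigr|\le |X_T1_{[T<\infty]}|+|Y_T1_{[T<\infty]}|
\end{equation*}
together with the fact that a sum of two $\sigma$-integrable random variables (with respect to the same $\sigma$-field) is $\sigma$-integrable — one may take the pairwise intersections of the two exhausting sequences — it follows that $(X\wedge Y)_T1_{[T<\infty]}$ is $\sigma$-integrable with respect to $\mathcal{F}_{T-}$. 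Hence $X\wedge Y\in L_{pp}$.

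Then I would verify solidity. Let $X\in L_m$ and $Y\in L_{pp}$ with $|X|\le|Y|$. For any $T\in\mathcal{T}_p$ we have $|X_T1_{[T<\infty]}|\le|Y_T1_{[T<\infty]}|$, and $Y_T1_{[T<\infty]}$ is $\sigma$-integrable with respect to $\mathcal{F}_{T-}$; since a random variable dominated in absolute value by a $\sigma$-integrable one is itself $\sigma$-integrable (multiply by the same exhausting sequence $\{\Omega_n\}$), $X_T1_{[T<\infty]}$ is $\sigma$-integrable with respect to $\mathcal{F}_{T-}$. Thus $X\in L_{pp}$, so $L_{pp}$ is an ideal of $L_m$, and therefore super Dedekind complete.

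I do not anticipate any real obstacle here: the argument is a routine transcription of Lemma~\ref{lemma3.1}, the only substantive points being that $\mathcal{T}_p$ is the set of predictable times and that conditioning is with respect to $\mathcal{F}_{T-}$ rather than $\mathcal{F}_T$, neither of which affects the ideal/Riesz-subspace reasoning. The closest thing to a point worth stating carefully is the elementary stability of $\sigma$-integrability under absolute domination and under finite sums, which is immediate from the definition via exhausting sequences in the relevant $\sigma$-field.
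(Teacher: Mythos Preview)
Your proposal is correct and is precisely the argument the paper intends: the paper states Lemma~\ref{lemma4.1} without proof, explicitly noting that it follows ``the same line of reasoning'' as Lemma~\ref{lemma3.1}, and your transcription---replacing $\mathcal{T}$ by $\mathcal{T}_p$ and $\mathcal{F}_T$ by $\mathcal{F}_{T-}$---is exactly that. If anything, you are slightly more careful than the paper's own proof of Lemma~\ref{lemma3.1} in spelling out the stability of $\sigma$-integrability under domination and finite sums.
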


For every process $X$ in $L_{pp}$, its predictable projection $^pX$ exists and is finite. Thus, $\widetilde{T}: X\mapsto {^pX}$ is a linear operator from $L_{pp}$ to $L_{m}$. Indeed, $\widetilde{T}$ is a projection on $L_{pp}$ as we shall show next.

\begin{theorem}\label{theorem4.1}
The linear operator $\widetilde{T}: X\mapsto  {^pX}$ is an order-continuous strictly positive Markov projection on $L_{pp}$. Moreover, $\widetilde{T}$ is also an averaging operator.
\end{theorem}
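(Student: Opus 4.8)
\textbf{Proof proposal for Theorem \ref{theorem4.1}.}
The plan is to mirror, line by line, the proof of Theorem \ref{theorem3.1}, substituting the predictable $\sigma$-field for the optional one, $\mathcal{F}_{T-}$ for $\mathcal{F}_T$, predictable times for stopping times, and Equation (\ref{4.0}) for Equation (\ref{3.0}). Throughout, the existence and uniqueness of $^pX$ for $X\in L_{pp}$ is guaranteed by Theorem \ref{theorem4.0}, and the super Dedekind completeness of $L_{pp}$ is Lemma \ref{lemma4.1}; these are the two structural inputs that make the argument run.

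First I would verify that $\widetilde{T}$ maps $L_{pp}$ into itself. Fix $X\in L_{pp}$ and a predictable time $T\in\mathcal{T}_p$; choose $\{\Omega_n\}\subset\mathcal{F}_{T-}$ with $\Omega_n\uparrow\Omega$ and $X_T1_{[T<\infty]}1_{\Omega_n}$ integrable. Applying (\ref{4.0}) and Jensen's inequality for generalized conditional expectations exactly as in Theorem \ref{theorem3.1},
\begin{equation*}
E\bigl|{^pX}_T1_{[T<\infty]}1_{\Omega_n}\bigr|
\leq E\bigl[E[\,|X_T1_{[T<\infty]}1_{\Omega_n}|\mid\mathcal{F}_{T-}]\bigr]
= E\bigl[|X_T1_{[T<\infty]}1_{\Omega_n}|\bigr]<\infty,
\end{equation*}
so $^pX$ is $\sigma$-integrable w.r.t. $\mathcal{F}_{T-}$ for every $T\in\mathcal{T}_p$; since $^pX$ is predictable hence measurable, $\widetilde{T}(X)\in L_{pp}$. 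The projection property $\widetilde{T}^2=\widetilde{T}$ then follows because applying (\ref{4.0}) to $^pX$ and using that $^pX$ is predictable gives $E[{^pX}_T1_{[T<\infty]}\mid\mathcal{F}_{T-}]={^pX}_T1_{[T<\infty]}$, i.e. $^p({^pX})={^pX}$. Positivity, the Markov property $\widetilde{T}(1)=1$, and strict positivity are read off from (\ref{4.0}) just as before: if $X>0$ and $^pX=0$, then testing against constant predictable times $T=t$ forces $E[X_t]=0$ for all $t$, contradicting $X>0$ (here one uses that a $P$-evanescent modification aside, $X>0$ on a non-evanescent set forces positive expectation for some $t$).

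For order-continuity, by Lemma \ref{lemma4.1} it suffices to show $X^n\downarrow 0$ in $L_{pp}$ implies $^pX^n\downarrow 0$; applying the monotone convergence theorem for generalized conditional expectations to $X^1-X^n$ gives, for each $T\in\mathcal{T}_p$, $\lim_n {^pX}^n_T1_{[T<\infty]}=E[\lim_n X^n_T1_{[T<\infty]}\mid\mathcal{F}_{T-}]=0$, and since the limit process is predictable this identifies it as $^p0=0$. Finally, the averaging property: for $X,Y\in L_{pp}$ with $Y\,{^pX}\in L_{pp}$, the smoothing (or "taking out what is predictable") property of predictable projections yields ${^p(Y\,{^pX})}=({^pY})({^pX})$, i.e. $\widetilde{T}(Y\widetilde{T}(X))=\widetilde{T}(Y)\widetilde{T}(X)$.

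The only genuine subtlety — the step I would flag as the main obstacle — is justifying the Jensen, monotone-convergence, and smoothing statements \emph{at the level of the random variables indexed by predictable times} rather than for ordinary conditional expectations. These hold because for each fixed $T\in\mathcal{T}_p$ the quantities $X_T1_{[T<\infty]}$ and $^pX_T1_{[T<\infty]}$ are linked by (\ref{4.0}), which is a generalized-conditional-expectation identity to which the results of Section 2 (Jensen, monotone convergence, smoothing for $E[\cdot\mid\mathcal{G}]$ with $\mathcal{G}=\mathcal{F}_{T-}$) apply directly; one then invokes the uniqueness clause of Theorem \ref{theorem4.0} to transfer the pointwise-in-$T$ identities back to an identity of predictable processes. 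Once this bookkeeping is in place the proof is routine, and we omit the remaining details as they are identical to those of Theorem \ref{theorem3.1}.
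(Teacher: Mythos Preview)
Your proposal is correct and follows exactly the approach the paper intends: the paper explicitly omits the proof of Theorem~\ref{theorem4.1}, stating that it follows the same line of reasoning as Theorem~\ref{theorem3.1} with the predictable substitutions you describe. Your write-up faithfully carries out that transcription, and the one subtlety you flag (passing from identities at each predictable time $T$ to an identity of predictable processes via the uniqueness clause of Theorem~\ref{theorem4.0}) is the right point to note.
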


The next result shows that the properties of $\widetilde{T}$ stated in Theorem \ref{theorem4.1} characterizes the predictable projection.

\begin{theorem}[Characterization of predictable projections]
For any linear operator $\widetilde{T}: L_{pp}\rightarrow L_{pp}$, the following two statements are equivalent.
\begin{enumerate}
  \item [(1)]$\widetilde{T}$ is the predictable projection operator $\widetilde{T}: X\mapsto {^pX}$.
  \item [(2)]$\widetilde{T}$ is an order-continuous Markov projection on $L_{pp}$.
\end{enumerate}
\end{theorem}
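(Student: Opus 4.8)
The plan is to mirror the proof of Theorem~\ref{theorem3.2}, performing the substitutions natural to the predictable setting: the collection $\mathcal{T}$ of stopping times is replaced by the collection $\mathcal{T}_p$ of predictable times, the $\sigma$-field $\mathcal{F}_T$ by $\mathcal{F}_{T-}$, the optional projection ${^oX}$ by the predictable projection ${^pX}$, formula~(\ref{3.0}) by~(\ref{4.0}), and the right-open stochastic intervals $\llbracket s_1,s_2\llbracket$ by the left-open, right-closed stochastic intervals $\rrbracket s_1,s_2\rrbracket$, which together with the graph $\llbracket 0\rrbracket$ generate the predictable $\sigma$-field $\mathcal{P}$. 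The implication (1)$\Longrightarrow$(2) is precisely Theorem~\ref{theorem4.1}, so all the work is in (2)$\Longrightarrow$(1).

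For (2)$\Longrightarrow$(1), I would first use that $\widetilde{T}$ is a Markov projection to get $\widetilde{T}(1)=1$ for the constant process $1=(1)_{t\ge 0}$, which by~(\ref{4.0}) is its own predictable projection. Next, for an integrable random variable $\xi$ on $(\Omega,\mathcal{F},P)$, choose simple random variables $\xi^n$ with $|\xi^n|\le|\xi|$ and $\xi^n\uparrow\xi$; linearity and order-continuity of $\widetilde{T}$ together with the dominated convergence theorem for generalized conditional expectations give $\widetilde{T}\big((\xi)_{t\ge 0}\big)=\big(E[\xi\mid\mathcal{F}_{t-}]\big)_{t\ge 0}$. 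That this process equals ${^p\big((\xi)_{t\ge 0}\big)}$ via~(\ref{4.0}) is the point at which the left-hand character of the predictable setting must be exploited: one invokes that, for a predictable time $T$, the left-hand limit $M_{T-}$ of the right-continuous martingale $M_t=E[\xi\mid\mathcal{F}_t]$ equals $E[\xi\mid\mathcal{F}_{T-}]$.

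Granting that $\widetilde{T}$ acts as the identity on the predictable processes $1_{\rrbracket s_1,s_2\rrbracket}$ and $1_{\llbracket 0\rrbracket}$ (see below), the averaging property of $\widetilde{T}$ then gives, for integrable $\xi$ and $0\le s_1<s_2\le+\infty$,
\begin{equation*}
\widetilde{T}\big(\xi 1_{\rrbracket s_1,s_2\rrbracket}\big)=\widetilde{T}\big((\xi)_{t\ge 0}\,\widetilde{T}(1_{\rrbracket s_1,s_2\rrbracket})\big)=1_{\rrbracket s_1,s_2\rrbracket}\,\big(E[\xi\mid\mathcal{F}_{t-}]\big)_{t\ge 0},
\end{equation*}
and likewise $\widetilde{T}\big(\xi 1_{\llbracket 0\rrbracket}\big)=1_{\llbracket 0\rrbracket}\,\big(E[\xi\mid\mathcal{F}_{0-}]\big)$; using~(\ref{4.0}) and the $\mathcal{F}_{T-}$-measurability of a predictable process evaluated along a predictable time, one checks that these right-hand sides are ${^p\big(\xi 1_{\rrbracket s_1,s_2\rrbracket}\big)}$ and ${^p\big(\xi 1_{\llbracket 0\rrbracket}\big)}$. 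By linearity, $\widetilde{T}(X)={^pX}$ for every simple predictable process $X$. A monotone-class (Dynkin-system) argument, legitimate because both $\widetilde{T}$ (recall that $L_{pp}$ is super Dedekind complete by Lemma~\ref{lemma4.1}) and the predictable projection operator (Theorem~\ref{theorem4.1}) are order-continuous, upgrades this to $\widetilde{T}(1_B)={^p(1_B)}$ for every $B\in\mathcal{F}\times\mathcal{B}(R_+)$; approximating a nonnegative $X\in L_{pp}$ from below by nonnegative simple processes in $L_{pp}$ and invoking order-continuity once more gives $\widetilde{T}(X)={^pX}$ for $X\ge 0$, and writing a general $X\in L_{pp}$ as $X^+-X^-$ finishes the proof.

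I expect two sources of difficulty. The first, already present in the proof of Theorem~\ref{theorem3.2}, is the parenthetical step just used: being an order-continuous Markov projection does not by itself force $\widetilde{T}$ to be an averaging operator, nor to fix the stochastic intervals $1_{\rrbracket s_1,s_2\rrbracket}$ and $1_{\llbracket 0\rrbracket}$ (already on a finite-dimensional Riesz space there are order-continuous Markov projections that are not averaging), so one must either read the averaging property as part of hypothesis~(2)---in parallel with Theorem~\ref{theorem4.1}---and then deduce the required invariance, or assume that invariance outright. The second is bookkeeping: one needs the ``predictable'' counterparts of the classical sampling facts ($M_{T-}=E[\xi\mid\mathcal{F}_{T-}]$ for predictable $T$, $\mathcal{F}_{T-}$-measurability of predictable processes along predictable times, and the monotone and dominated convergence theorems for generalized conditional expectations with respect to $\mathcal{F}_{T-}$) in order to confirm that the candidate images are genuinely the predictable projections supplied by Theorem~\ref{theorem4.0}.
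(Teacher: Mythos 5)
Your proposal takes essentially the same route as the paper: the paper omits the proof of this theorem altogether, stating only that it follows the same line of reasoning as Section~3, and your translation of the proof of Theorem~\ref{theorem3.2} (predictable times for stopping times, $\mathcal{F}_{T-}$ for $\mathcal{F}_T$, formula~(\ref{4.0}) for~(\ref{3.0}), and predictable stochastic intervals as generators) is exactly the intended argument. The difficulty you flag---that hypothesis~(2) does not by itself give the averaging property or the invariance of the indicator processes of stochastic intervals---is a genuine issue, but it is inherited verbatim from the paper's own proof of Theorem~\ref{theorem3.2}, which invokes the averaging property even though~(2) only assumes an order-continuous Markov projection, so your write-up is no weaker than (and is candid about) the argument the paper relies on.
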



\bibliographystyle{amsplain}

\end{document}